\newtheorem*{rep@theorem}{\rep@title}
\newcommand{\newreptheorem}[2]{%

\newenvironment{rep#1}[1]{%
 \def\rep@title{#2 \ref{##1}}%
 \begin{rep@theorem}}%
 {\end{rep@theorem}}}
\newtheorem{theorem}{Theorem}[section]
\newtheorem{proposition}[theorem]{Proposition}
\newtheorem{lemma}[theorem]{Lemma}
\theoremstyle{definition}
\newtheorem{definition}[theorem]{Definition}
\newtheorem*{remark}{Remark}
\newcommand{\dd}{\mathrm{d}}
\newcommand{\Mod}[1]{\ \mathrm{mod}\ #1}
\DeclareMathOperator{\Var}{Var}
\makeatletter \renewcommand{\maketitle}{ \begin{center} {\normalsize\bfseries\MakeUppercase \@title \par} \vskip 1em {\small\MakeUppercase \@author \par} \vskip 1em {\small \@date \par} \end{center} } \makeatother
\renewenvironment{abstract}{ \noindent\text{\small A\scshape bstract. } \noindent\ignorespaces }{ \par }
\title{Short Sums of the Liouville Function over Function Fields}
\date{}
\author{Simon Fleet}
\begin{document}
\maketitle

\begin{abstract}
    \noindent
    Let $\lambda$ denote the Liouville function for function fields. We prove that for a fixed $q$, given $h \ll \sqrt{N}$ and $h(N) \to \infty$ arbitrarily slowly as $N \to \infty$, then
    \begin{equation*}
        \frac{1}{q^N}\sum_{G_0 \in \mathcal{M}_N}|\sum_{G \in \mathcal{I}_{h}(G_0)}\lambda(G)|^2 \ll_q \frac{N^5}{h^2}q^{h}.
    \end{equation*}
    The proof follows a similar method of an analogous case in the integer setting developed by Chinis, adapting methods originally developed by Matomäki and Radziwiłł.
\end{abstract}

\section{Introduction}
This paper studies how the Liouville Lambda function for function fields behaves in short intervals. Over the integers, we define $\lambda: \mathbb{N} \to \mathbb{C}$, to be completely multiplicative, taking the value $\lambda(p)=-1$ for all primes $p$. Since it is not expected that the integers favour having an even (or odd) number of prime factors we expect some cancellation in its partial sums. It is well known that showing \[\sum_{n \leq x}\lambda(n) = o(x),\]
is equivalent to the Prime Number Theorem (see \cite[Theorem~4.14]{apostol2013introduction}). Furthermore, if $\lambda$ can be modelled by an independent random variable, taking value $\pm 1$ with probability $1/2$, then, for any $\epsilon > 0$, we expect 
\begin{equation*}
        \sum_{n \leq x}\lambda(n) \ll_\epsilon x^{1/2 + \epsilon},
\end{equation*}
as $x \to \infty$. Proving this turns out to be equivalent to the error term given by the Riemann Hypothesis \cite[Theorem~14.25C]{TRZF}.\\

Over function fields \(\lambda : \mathbb{F}_q[t] \to \mathbb{C}\) is similarly defined to taking the value \(\lambda(P) = -1\) for every prime polynomial \(P\). Taking sums of $\lambda$ over \(\mathcal{M}_n\), it is not too hard to show that
\[\sum_{G \in \mathcal{M}_n}\lambda(G) = (-1)^n q^{\lceil \frac{n}{2} \rceil}.\]

Since \(|\mathcal{M}_n|=q^n\), this shows that in the function fields setting $\lambda$ exhibits square root cancellation over \(\mathcal{M}_n\). It is also true that cancellation occurs in shorter intervals. To determine how much cancellation occurs, we calculate the average size of sums of \(\lambda\) over short intervals.\\ 

To study short intervals over function fields, we typically have two regimes. Either we let the size of the finite field, $q \to \infty$ and fix $N$, the degree of polynomials we are summing over. Or we fix $q$ and let $N \to \infty$. Progress towards the large $q$ case has been made by Keating and Rudnick \cite{KRvar}, proving the following:

\begin{theorem}\cite[Theorem~1.2]{KRvar}
    \label{largeqvar}
     If $0 \leq h \leq N-5$, then as $q \to \infty$, $q$ odd, 
    \begin{equation*}
        \frac{1}{q^N}\sum_{G_0 \in \mathcal{M}_N}|\sum_{G \in \mathcal{I}_h(G_0)}\mu(G)|^2 \sim q^{h+1}.
    \end{equation*}
\end{theorem}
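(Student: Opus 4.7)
The plan is to invoke the Keating--Rudnick framework, which converts a short-interval variance into an average of traces of Frobenius conjugacy classes attached to Dirichlet $L$-functions, to which Katz's equidistribution theorem then applies in the large-$q$ limit. First, I would express indicator functions of short intervals via characters: the short interval $\mathcal{I}_h(G_0)$ is a coset of the additive subgroup of polynomials of degree $\leq h$, and one checks (via the involution $G(T)\mapsto T^N G(1/T)$) that membership in $\mathcal{I}_h(G_0)$ among degree-$N$ monic polynomials is detected, up to normalization, by the group of \emph{even} Dirichlet characters modulo $T^{N-h}$. Applying Plancherel on this finite abelian group, the variance rearranges into
\[
\sum_{\chi\text{ prim. even}\Mod{T^{m}}}\;\Bigl|\sum_{G\in\mathcal{M}_N}\mu(G)\chi(G)\Bigr|^2
\]
summed over $m\leq N-h$, plus a negligible main-character contribution.

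Second, I would evaluate each inner character sum through the generating identity $\sum_G \mu(G)\chi(G) u^{\deg G} = 1/L(u,\chi)$. For primitive $\chi$ of conductor $T^m$, the $L$-function $L(u,\chi)$ is a polynomial whose inverse roots $\sqrt{q}\,e^{i\theta_j(\chi)}$ lie on the critical circle by Weil's Riemann Hypothesis. Expanding $1/L(u,\chi)$ and extracting the $u^N$ coefficient expresses the character sum as $q^{N/2}$ times a symmetric function of the eigenphases $e^{i\theta_j(\chi)}$, namely $\operatorname{tr}\operatorname{Sym}^N\Theta_\chi$ for the unitarized Frobenius $\Theta_\chi\in PU(m-1)$ (with an adjustment for the evenness condition).

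Third, I would invoke Katz's equidistribution theorem, which asserts that as $q\to\infty$ the Frobenius classes $\Theta_\chi$, as $\chi$ ranges over primitive even characters modulo $T^{N-h}$, become equidistributed in the projective unitary group $PU(N-h-2)$ with respect to Haar measure. The variance therefore reduces, in the limit, to a matrix integral
\[
\int_{PU(N-h-2)}\bigl|\operatorname{tr}\operatorname{Sym}^N U\bigr|^2\,\dd U,
\]
which by the orthogonality of irreducible characters of the unitary group evaluates to $q^{h+1}$ after restoring the factor $q^N$ and the weight $q^N$ coming from the Plancherel normalization.

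The main obstacle is the third step: Katz's equidistribution result is the deep input, and one must verify its hypotheses carefully for the family of primitive even characters modulo $T^{N-h}$ (in particular, that the relevant monodromy group is the full unitary group, which requires the constraint $h\leq N-5$). The other ingredients---Parseval on finite abelian groups, the Euler product expansion of $1/L(u,\chi)$, and the unitary matrix integral---are essentially routine once the geometric input is in hand.
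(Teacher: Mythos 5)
Your proposal correctly reproduces the Keating--Rudnick proof strategy, which is exactly what the paper relies on: the paper does not reprove Theorem \ref{largeqvar} itself but cites \cite{KRvar}, and the involution / even-character reduction (Lemma \ref{Variance Formula}) is precisely the first step you describe. The three-step chain---Plancherel on even characters mod $t^{N-h}$, expressing $\sum_{G\in\mathcal{M}_N}\mu(G)\chi(G)$ as $q^{N/2}\operatorname{tr}\operatorname{Sym}^N\Theta_\chi$ via the $u^N$-coefficient of $1/L(u,\chi)$, and Katz equidistribution of $\Theta_\chi$ in $PU(N-h-2)$---is the genuine argument, and the constraint $h\le N-5$ is indeed there so that the conductor $t^{N-h}$ is large enough for the monodromy to be the full group.

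Two bookkeeping points are worth flagging. First, since $\mu(t)=-1$, Lemma \ref{Variance Formula} actually produces the difference $\sum_{\mathcal{M}_N}\mu\chi-\sum_{\mathcal{M}_{N-1}}\mu\chi$; you retain only the $n=N$ piece, which is harmless because the $n=N-1$ piece is smaller by a factor $\sqrt{q}$, but this should be said. Second, the matrix integral
\[
\int_{PU(N-h-2)}\bigl|\operatorname{tr}\operatorname{Sym}^N U\bigr|^2\,\dd U
\]
evaluates to $1$ by Schur orthogonality (as $\operatorname{Sym}^N$ of the standard representation is irreducible and $|\operatorname{tr}\operatorname{Sym}^N U|^2$ descends to $PU$); the power $q^{h+1}$ is not the value of the integral but comes from the normalization
\[
\frac{q^N\cdot\phi_{ev}(t^{N-h})}{\phi_{ev}(t^{N-h})^2}=\frac{q^N}{q^{N-h-1}}=q^{h+1},
\]
where $q^N=|\sqrt{q}\,e^{i\theta}|^{2N}$ from the inverse roots and $\phi_{ev}(t^{N-h})=q^{N-h-1}$. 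Your phrase ``the factor $q^N$ and the weight $q^N$'' does not match this accounting, though the final answer is right. These are slips in constant-tracking rather than gaps in the argument.
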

Since $|\mathcal{I}_h(G_0)| = q^{h+1}$, a trivial bound for this is $q^{2(h+1)}$, so this theorem shows square root cancellation in sums of $\mu$ on average, across all short intervals. Whilst this calculates the variance for $\mu$, it is closely linked to $\lambda$ so the corresponding result will be close in size. One of the main methods used in the proof, of the large $q$ setting, is an equidistribution result developed by Katz \cite{Katzequi}. This result only holds when $q \to \infty$, hence other tools will be required for the fixed \(q\) case.\\

Progress has been made, by Chinis \cite{CHsuml}, towards calculating the variance of $\lambda$ in short intervals over the integers, proving the following conditional result.
\begin{theorem}\cite[Theorem~1.2]{CHsuml} 
    \label{Chinisres}
    Assuming the Riemann hypothesis
    \begin{equation*}
        \int_X^{2X}|\sum_{x \leq n \leq x+h}\lambda(n)|^2 dx \ll Xh(\log X)^6,
    \end{equation*}
    as $X \to \infty$ provided $h = h(X) \leq \exp{\left(\sqrt{(\frac{1}{2}-o(1))\log X \log\log X}\right)}$.
\end{theorem}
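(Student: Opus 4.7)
The plan is to follow the Matomäki--Radziwiłł framework, sharpened by the Riemann Hypothesis in the manner that Chinis carries out. First I would convert the mean-square of the short sum into a second-moment integral of a Dirichlet polynomial via a Plancherel-type identity: setting $F(s) = \sum_{X < n \leq 2X} \lambda(n)n^{-s}$, one obtains
\begin{equation*}
\int_X^{2X}\Bigl|\sum_{x \leq n \leq x+h}\lambda(n)\Bigr|^2 dx \ll \frac{h^2}{X}\int_{-T}^{T}|F(1+it)|^2\, dt + \text{(boundary error)},
\end{equation*}
with $T\asymp X/h$. The task thus reduces to bounding this mean value, a standard short-interval-to-Dirichlet-polynomial conversion.

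Next I would apply the core Matomäki--Radziwiłł decomposition: almost every $n\in (X,2X]$ admits a prime factor in a carefully chosen range $[P_1,P_2]$, so up to a sparse exceptional set one may factor $F(s) \approx P(s) M(s)$ with $P$ supported on primes in $[P_1,P_2]$ and $M$ on the complementary factors. The complete multiplicativity of $\lambda$ makes this particularly clean (since $\lambda(pm)=-\lambda(m)$), and the exceptional contribution is controlled by standard sieve/Turán--Kubilius bounds. One is then left to estimate $\int |P(1+it)|^2|M(1+it)|^2\, dt$.

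For this integral I would split the $t$-range. On the "typical" set I would invoke the standard Dirichlet polynomial mean-value theorem. On the "bad" set, where either $|P(1+it)|$ or $|M(1+it)|$ is atypically large, I would apply a Halász--Montgomery large-values estimate. Here the Riemann Hypothesis enters decisively: through the identity $\sum_n \lambda(n)n^{-s} = \zeta(2s)/\zeta(s)$ together with the conditional bound $|\zeta(1/2+\delta+it)^{\pm 1}| \ll_\delta (\log|t|)^{O(1)}$, RH forces $|F(1+it)|$ to be small outside a provably thin exceptional set, and similarly controls the Dirichlet polynomials $P,M$. The permissible range $h \leq \exp\bigl(\sqrt{(1/2-o(1))\log X \log\log X}\bigr)$ should emerge from balancing the length of the prime range $[P_1,P_2]$ against the RH-conditional error in $\pi(x+y) - \pi(x)$.

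The step I expect to be hardest is the large-values analysis: obtaining a sharp enough upper bound on the measure of the set $\{t \in [-T,T] : |F(1+it)| \text{ large}\}$, tight enough to produce the $(\log X)^6$ factor rather than a worse power. This requires careful bookkeeping of logarithmic losses through Perron's formula, the factorisation step, and Halász's theorem, and the delicate interplay between the quantitative form of RH and the precise length of $P$ is where Chinis's refinement of the unconditional Matomäki--Radziwiłł argument is most subtle.
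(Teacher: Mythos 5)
This theorem is cited from Chinis \cite{CHsuml}; the paper does not prove it, but its introduction sketches Chinis's method, and the proof of the paper's main result (Theorem \ref{Result}) is an explicit function-field transplant of Chinis's argument, so one can infer the structure. Your proposal gets the opening move right (Plancherel conversion to a Dirichlet-polynomial second moment and a Matomäki--Radziwiłł-style factorisation via a prime in a chosen range), but the middle of the argument deviates in a way that matters.

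The decisive point you miss is \emph{how} RH enters. You propose a typical/atypical dichotomy in $t$, handling the atypical set with Halász--Montgomery large-values estimates and using RH to thin out that set. That is the shape of the unconditional Matomäki--Radziwiłł argument, and it is precisely what Chinis's approach is designed to avoid. In Chinis's proof, after Ramaré's identity extracts a prime Dirichlet polynomial $\sum_{p\sim P} p^{-it}$ (or, via Perron, a character sum $\sum_{p\sim P}\chi(p)$), RH is applied \emph{pointwise}, for every $t$ (every $\chi$), to give $|\sum_{p\sim P} p^{-it}| \ll (\log)^{O(1)}/\sqrt{P}$; one can then pull this factor out of the integral and hit the remaining, shorter polynomial with the $L^2$ mean-value theorem. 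There is no large-values dichotomy at all. This is also why the conclusion is a clean power-of-$\log$ bound $Xh(\log X)^6$ rather than the $o(Xh^2)$ saving the Halász--Montgomery route produces: the pointwise RH bound is far more efficient than any large-values estimate, and the only price paid is a restricted range of $h$. The exact analogue in the paper is the sequence Proposition~\ref{l1 pcs} (pointwise bound on $\sum_{P\in\mathcal{P}_x}\chi(P)$ via the function-field RH, i.e.\ Lemma~\ref{weighted von mandgolt}) followed by Lemma~\ref{MVT}; no large-values machinery appears anywhere.

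You also misattribute the source of the range constraint $h\le\exp\bigl(\sqrt{(1/2-o(1))\log X\log\log X}\bigr)$. It does not come from balancing $[P_1,P_2]$ against the RH error in $\pi(x+y)-\pi(x)$. It comes from the exceptional set in Ramaré's identity, namely the $h$-smooth integers $n\le X$ (those with no prime factor $>h$). These contribute a term of size roughly $\Psi(X,h)$, and one needs $\Psi(X,h)\ll X/h$ so that this is dominated by the main term. By de Bruijn/Dickman asymptotics $\Psi(X,h)\approx X\rho(u)$ with $u=\log X/\log h$, and demanding $\rho(u)\le 1/h$ forces exactly $\log h \le \sqrt{(1/2-o(1))\log X\log\log X}$. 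The function-field mirror of this step is Theorem~\ref{No smooth} together with Lemma~\ref{bound no smooth}, which forces $h\ll\sqrt N$. If you rebuild your outline around the pointwise RH bound for the prime polynomial, the $L^2$ mean-value theorem for the complementary factor, and the smooth-number count as the binding constraint, you will land on Chinis's actual proof.
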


\begin{remark}
    This shows, for $(\log X)^{6+\epsilon} \leq h \leq \exp{\left(\sqrt{(\frac{1}{2}-o(1))\log X \log\log X}\right)}$, that $\lambda$ exhibits square root cancellation in almost all short intervals. 
\end{remark}

The idea behind this proof is to vary the proof of the Matomäki-Radziwiłł theorem \cite{MRthm}. Short sums are converted into longer sums in the Fourier space. Here we need to understand the average value of Dirichlet polynomials. The key tool for evaluating these sums is the $L^2$ mean value theorem given below.

\begin{lemma}
    For any sequence of complex numbers $\{a_n\} \subset \mathbb{C}$ we have:
    \begin{equation*}
        \int_0^T |\sum_{1 \leq n \leq N} a_n n^{it}|^2 dt 
        = (T + O(N))\sum_{1 \leq n \leq N}|a_n|^2.
    \end{equation*}
\end{lemma}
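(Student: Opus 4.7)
The plan is to expand the modulus squared, interchange summation and integration, and isolate the diagonal from the off-diagonal terms. Writing
\begin{equation*}
    \left|\sum_{1 \leq n \leq N} a_n n^{it}\right|^2 = \sum_{1 \leq m,n \leq N} a_m \overline{a_n}\, (m/n)^{it},
\end{equation*}
integration over $[0,T]$ produces the main term $T \sum_n |a_n|^2$ from the diagonal $m=n$, plus an off-diagonal contribution
\begin{equation*}
    \sum_{\substack{1 \leq m,n \leq N \\ m \neq n}} a_m \overline{a_n} \cdot \frac{(m/n)^{iT} - 1}{i \log(m/n)},
\end{equation*}
in which each integral is bounded in modulus by $2/|\log(m/n)|$.

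The substantive task is then to show that this off-diagonal sum is $O\bigl(N \sum_n |a_n|^2\bigr)$. A direct AM--GM bound $|a_m \overline{a_n}| \leq \tfrac{1}{2}(|a_m|^2 + |a_n|^2)$ combined with the elementary estimate $|\log(m/n)|^{-1} \ll N/|m-n|$ only yields $O\bigl(N \log N \sum_n |a_n|^2\bigr)$, so the stray logarithm has to be shaved. I would do this by invoking the Montgomery--Vaughan refinement of Hilbert's inequality: for distinct reals $\lambda_1, \ldots, \lambda_N$ with $\delta_n := \min_{m \neq n} |\lambda_m - \lambda_n|$,
\begin{equation*}
    \left|\sum_{m \neq n} \frac{x_m \overline{x_n}}{\lambda_m - \lambda_n}\right| \ll \sum_n \frac{|x_n|^2}{\delta_n}.
\end{equation*}
Applied with $\lambda_n = \log n$, where $\delta_n \asymp 1/n$, this bounds the off-diagonal by $\sum_n n |a_n|^2 \leq N \sum_n |a_n|^2$, which gives the claim.

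The routine parts are the diagonal computation and the evaluation of the oscillatory integral $\int_0^T (m/n)^{it}\,dt$. The genuine obstacle is obtaining the sharp $O(N)$ coefficient; the Montgomery--Vaughan inequality above is the key tool, and without it one recovers only the weaker $O(N \log N)$, which, although often sufficient in applications, falls short of the bound stated here. I would therefore either cite Montgomery--Vaughan directly or, if a self-contained treatment is wanted, prove the required Hilbert-type inequality by testing against an auxiliary quadratic form and applying Cauchy--Schwarz, which is the standard route.
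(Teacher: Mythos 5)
Your sketch is correct and follows the standard route to this mean value theorem (diagonal extraction plus the Montgomery--Vaughan generalized Hilbert inequality with $\lambda_n = \log n$, $\delta_n \asymp 1/n$), which is precisely the argument behind the reference the paper cites; the paper itself gives no proof, only a pointer to \cite[Theorem~9]{ANT}. The one cosmetic omission is that the numerator $(m/n)^{iT}-1$ requires splitting the off-diagonal sum into two bilinear forms, one with weights $a_m m^{iT}$ and one with weights $a_m$, before Montgomery--Vaughan can be applied to each separately, but this is routine and does not affect the $O(N)$ bound.
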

\begin{proof}
    See \cite[Theorem~9]{ANT}.
\end{proof}

When $N \ll T$ the diagonal terms become dominant, causing this lemma to become particularly strong. In the proof of Theorem \ref{Chinisres}, they begin with $N=X$ and $T \simeq X/h$. The main idea is to split the sum in such a way that $N \ll T$, then applying the mean value theorem gives strong bounds.\\

In a recent paper by Klurman, Mangerel, and Teräväinen \cite{corrff} an analogue of the Matomäki-Radziwiłł Theorem for function fields is proven. They develop numerous tools, including an analogue of the $L^2$ mean value theorem. See \cite{klurman2023multiplicative} for related results and techniques over number fields. This allows us to adapt the method of proof used by Chinis (Theorem \ref{Chinisres}) to the function fields setting, proving the following:

\begin{theorem}\label{Result}
    For a fixed $q$. Given $h \ll \sqrt{N}$ and $h(N) \to \infty$ arbitrarily slowly as $N \to \infty$. Then
    \begin{equation*}
        \frac{1}{q^N}\sum_{G_0 \in \mathcal{M}_N}|\sum_{G \in \mathcal{I}_{h}(G_0)}\lambda(G)|^2 \ll_q \frac{N^5}{h^2}q^{h}.
    \end{equation*}
\end{theorem}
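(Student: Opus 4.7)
The plan is to adapt Chinis's method for Theorem \ref{Chinisres} to the function-field setting, replacing the integer $L^2$ mean value theorem with the function-field analogue of Klurman, Mangerel, and Teräväinen. The starting point is to convert the variance into a spectral average. By orthogonality of the additive characters of $\mathbb{F}_q[t]$ (which play the role of the frequencies $n^{it}$ in the integer setting), the short-interval sum $\sum_{G \in \mathcal{I}_h(G_0)} \lambda(G)$ is a Fourier coefficient in $G_0$ of a Dirichlet polynomial, so Parseval turns the variance into a weighted $L^2$-mean of polynomials $\mathcal{L}(u,\alpha) = \sum_{G \in \mathcal{M}_N}\lambda(G)\psi_\alpha(G)u^N$, with the weight supported on characters $\psi_\alpha$ of low degree --- the analogue of restricting to $|t| \leq X/h$ in Chinis's approach.

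The next step exploits multiplicativity in Matomäki--Radziwi\l\l{} fashion. For a carefully chosen degree range $[H_1,H_2]$ with $h \leq H_1 < H_2 \leq N-h$, the Turán--Kubilius inequality over $\mathbb{F}_q[t]$ (available in the Klurman--Mangerel--Teräväinen toolkit) shows that almost all $G \in \mathcal{M}_N$ have $\Omega_{[H_1,H_2]}(G) \gg \log(H_2/H_1)$ prime factors of degree in $[H_1,H_2]$. On this typical set I rewrite
\[
\lambda(G) \;=\; \frac{1}{\Omega_{[H_1,H_2]}(G)}\sum_{\substack{P\mid G \\ H_1 \leq \deg P \leq H_2}} \lambda(P)\lambda(G/P),
\]
so that a dyadic decomposition in $\deg P$ factors the relevant Dirichlet polynomial as a product of a prime polynomial $\sum_{\deg P \in [H_1,H_2]} \lambda(P)u^{\deg P}$ (controlled by the prime polynomial theorem) with a polynomial of length $q^{N - \deg P}$. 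Applying Cauchy--Schwarz followed by the function-field mean value theorem to each dyadic piece bounds the contribution, provided the effective "$T$-length" $q^{N-h}$ dominates the polynomial length $q^{H_2}$; this forces $H_2 \leq N - h$, and combined with the requirement $\log(H_2/H_1) \to \infty$ explains why the condition $h \ll \sqrt{N}$ in the hypothesis is enough room to choose the parameters. Powers of $N$ accumulate from the $1/\Omega$ averaging, the dyadic sum, and the Turán--Kubilius exceptional set, producing the factor $N^5/h^2$.

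The principal obstacle will be the second step: choosing the prime-degree window $[H_1,H_2]$ and controlling atypical $G$ --- those with too few prime factors in the window --- without losing more than a power of $N$ in the process, since these powers are what separate the conclusion from true square-root cancellation $q^h$. A secondary technical issue is verifying that the Parseval identity is set up so that the short-interval indicator genuinely restricts the character sum to a length $\asymp q^{N-h}$; miscalibrating this restricts the gain from the mean value theorem and would propagate extra factors of $q$ into the final bound.
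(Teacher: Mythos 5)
Your proposal departs from the paper's argument in ways that create real gaps, so let me be specific about both the differences and the problems.

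\textbf{Different spectral setup.} You propose to Fourier-expand in additive characters of $\mathbb{F}_q[t]$. The paper instead uses the Keating--Rudnick variance formula (Lemma \ref{Variance Formula}): short intervals are converted to arithmetic progressions via the involution $F \mapsto F^*$, and then the variance is written as a sum over \emph{even Dirichlet characters modulo $t^{N-h}$}. This requires verifying that $\lambda$ is symmetric under $*$ (Lemma \ref{symmetry of Liouville}), a step your plan skips. The two setups are morally cousins, but the Dirichlet-character formulation is the one for which the mean value theorem Lemma \ref{MVT} and the prime-sum bound are actually available.

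\textbf{Wrong decomposition.} You propose the Matom\"aki--Radziwi\l\l\ machinery: a Tur\'an--Kubilius input to show most $G$ have $\gg \log(H_2/H_1)$ prime factors in a window $[H_1,H_2]$, plus a $1/\Omega$ rewriting. That is not what the paper (or Chinis) does, and more importantly it will not deliver the claimed bound. The full MR machinery, used unconditionally, gives at best $o(q^{2h})$; to reach $N^{O(1)}q^h$ one must replace the MR pretentious-distance input with square-root cancellation in prime character sums. The paper's route is the simpler Chinis one: split $\mathcal{M}_n$ into $h$-smooth polynomials and polynomials with a prime factor of degree $> h$, apply the Ramar\'e-type identity (Lemma \ref{Ramaréff}, with the careful denominator $1_{(R,M)=1} + \omega_{[h,n]}(M)$ rather than your bare $1/\Omega$, which has a division-by-zero issue that the smooth/non-smooth split is precisely designed to sidestep), and handle the smooth part via the count $|\mathcal{S}_{h,n}| \ll q^{n-h}$ (Lemma \ref{bound no smooth}, relying on Theorem \ref{No smooth}). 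Your proposal has no ingredient that controls the smooth polynomials, and the Tur\'an--Kubilius exceptional set is a different (and weaker) surrogate.

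\textbf{The key input is missing.} You say the prime Dirichlet polynomial is ``controlled by the prime polynomial theorem.'' That only gives counts; it does \emph{not} yield cancellation in $\sum_{P \in \mathcal{P}_x} \chi(P)$ for nontrivial $\chi$. What is actually needed --- and what the paper uses (Proposition \ref{l1 pcs}, built on Lemma \ref{weighted von mandgolt}, i.e.\ Weil/Rhin) --- is the function-field Riemann Hypothesis bound $|\sum_{P \in \mathcal{P}_x}\chi(P)| \ll \frac{N-h}{x}q^{x/2}$. This is the exact unconditional replacement for the RH hypothesis in Chinis's integer result, and it is where the savings come from. Without it there is no square-root cancellation and the final exponent of $q$ is wrong.

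\textbf{The role of $h \ll \sqrt N$ is misdiagnosed.} You attribute it to having ``room to choose the window $[H_1,H_2]$.'' In fact, in the paper the constraint enters solely through Lemma \ref{bound no smooth}: one needs $h^2 \ll N \log_q(N/h)$ so that $|\mathcal{S}_{h,N}| \ll q^{N-h}$, making the smooth contribution subordinate to the non-smooth one. Your version of the proof never encounters this condition, which is a symptom of the decomposition being the wrong one.

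In short: the broad strategy (convert to a spectral average, use multiplicativity to factor, apply the $L^2$ mean value theorem) matches the paper, but your middle step substitutes the heavier and weaker MR/Tur\'an--Kubilius apparatus for the lighter Ramar\'e-plus-smooth-count split, and omits the Weil bound that actually produces the power saving. You would need to replace the Tur\'an--Kubilius step with the smooth/non-smooth dichotomy and explicitly invoke the Weil--Rhin bound on prime character sums to make the argument close.
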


As $h$ grows, $q^h$ becomes the dominant term in the bound. For $h = a \log_q N$ the trivial bound is $q^{2a \log_q N}$, thus Theorem \ref{Result} gives a saving of $q^{a-5}$. Therefore, when $h > a\log_q N$ with $a$ much larger than $5$, Theorem \ref{Result} shows square root cancellation on average up to $\log$ factors, provided $h \ll \sqrt{N}$. Equating the short interval $\mathcal{I}_h(G_0)$ to the interval $[x, x+h]$ in the integer setting, this result shows square root cancellation on average for $h > (\log X)^{5 + \epsilon}$, so we see this is analogous to Theorem \ref{Chinisres}.\\

As with the integer setting, we obtain a smaller range of values that $h$ is allowed to take, however, the corresponding bound of the variance is much stronger than that given by the Matomäki-Radziwiłł Theorem. Comparing this with Theorem \ref{largeqvar} we have the extra $\frac{N^5}{h^2}$ term. In the large $q$ scenario, $N$ is fixed and thus $\frac{N^5}{h^2}$ is a constant term. Therefore, Theorem \ref{Result} equates to a bound of similar size. Since, $q \to \infty$, and $N$ is finite, each multiple of $q$ corresponds to a large change in the variance. Whereas, in the fixed $q$ case, each multiple of $q$ corresponds to only a small change in size.\\

\section*{Acknowledgements}
The author would like to thank Jake Chinis for suggesting this problem and his continuous support and encouragement with the research in this paper. The author is also grateful to Oleksiy Klurman for providing insightful discussions and comments that have helped improve aspects of this paper.

\pagebreak

\section{Notation}
Let $p$ be the characteristic of $\mathbb{F}_q[t]$, where $q = p^k$ for some $k \geq 1$. $\mathbb{F}_q[t]$ is the ring of polynomials over $\mathbb{F}_q$. 

$\mathcal{M}$ is defined to be the set of monic polynomials over $\mathbb{F}_q$ and $\mathcal{P}$ is the set of monic irreducible (prime) elements. For $n \in \mathbb{N}$, let $\mathcal{M}_n$, $\mathcal{M}_{<n}$ and $\mathcal{M}_{\leq n}$ be the subset of $\mathcal{M}$ consisting of polynomials of degree $n$, $<n$ and $\leq n$ respectively. With $\mathcal{P}_n$, $\mathcal{P}_{< n}$, $\mathcal{P}_{\leq n}$ defined similarly. The degree of a polynomial $F \in \mathbb{F}_q[t]$ is denoted $\deg(F)$. We say that $|G| = q^{\deg(G)}$.

The short interval of length $h$ centred about $G_0$ is given by
\[\mathcal{I}_{h}(G_{0}) = \{G \in \mathbb{F}_q[t] : \deg(G - G_{0}) \leq h\}.\]

The greatest common denominator of $F$ and $G$ is $H$, where $H$ is the polynomial of the largest degree such that $H|F$ and $H|G$, denoted $(F, G) = H$. $F$ and $G$ are said to be coprime if $(F,G) = 1$. A function $f: \mathcal{M} \to \mathbb{C}$ is said to be multiplicative if $f(FG) = f(F)f(G)$ whenever $(F,G)=1$. Further, we say that $f$ is completely multiplicative if we can remove the condition that $F$ and $G$ must be coprime.\\
\begin{itemize}[noitemsep]
    \item $\omega: \mathcal{M} \to \mathbb{C}$ counts the number of distinct irreducible factors of a given element of $\mathcal{M}$.
    \item $\mu: \mathcal{M} \to \mathbb{C}$ is given by $\mu(F) = (-1)^{\omega(F)}$ if $F$ is square-free, else $\mu(F) = 0$.
    \item $\lambda: \mathcal{M} \to \mathbb{C}$ is completely multiplicative, with $\lambda(P) = - 1$ for every $P \in \mathcal{P}$.
    \item $\phi: \mathcal{M} \to \mathbb{C}$ is given by $\phi(F) = |\mathbb{F}_q[f]/F\mathbb{F}_q[t]^*|$. 
    \item $\Lambda: \mathcal{M} \to \mathbb{C}$ is given by $\Lambda(F) = \deg(P)$ if $F = P^k$, else $0$.
    \item $\pi_q:\mathbb{N} \to \mathbb{C}$, is given by \(\pi_q(n) = |\mathcal{P}_n|\). This counts the number of prime polynomials of a given degree.
\end{itemize}

 We say a polynomial $F \in \mathbb{F}_q[t]$, is \emph{$h$-smooth} if all of its irreducible factors are of degree at most $h$. Let $\mathcal{S}_{h,n}$ denote the set of \emph{$h$-smooth} polynomials of degree $n$.
 A function $f : \mathbb{F}_q[t] \to \mathbb{C}$ is said to be \emph{even} if for all $F \in \mathbb{F}_q[t]$ we have $f(cF) = f(F)$, for any $c \in \mathbb{F}_q^*$.
A Dirichlet character of modulus $H$ is a function $\chi: \mathbb{F}_q[t] \to \mathbb{C}$, if for all $F, G \in \mathbb{F}_q[t]$ we have:
    \begin{equation*}
        \chi(FG) = \chi(F)\chi(G)
    \end{equation*}
    \begin{equation*}
        \chi(F)  
        \begin{cases}
            =0, & gcd(F,H) > 1\\
            \neq 0, & gcd(F,H) =1\\
        \end{cases}
    \end{equation*}
    \begin{equation*}
        \chi(F+H) = \chi(F)
    \end{equation*}

Define $\Phi(Q)$ to be the number of Dirichlet characters modulo $Q$. Also, define $\Phi_{ev}(Q)$ to be the number of even characters modulo $Q$.

\section{Preliminary Results}
This section introduces the notion of variance in short intervals providing a starting place for the proof of the Theorem \ref{Result}. Other results are introduced, including the $L^2$ mean value theorem for function fields and an involution for polynomials.
\begin{definition}
    The variance of a function $f$, in short intervals of length $h$ centred around degree $N$ polynomials, is given by
    \[\Var(f_{N,h}) = \frac{1}{q^N}\sum_{G_0 \in \mathcal{M}_N}|\sum_{G \in \mathcal{I}_h(G_0)}f(G)|^2.\]
\end{definition}

Using methods developed by Keating and Rudnick \cite{KRvar}, the formula for the variance is converted into weighted character sums over $\mathcal{M}_n$ that are easier to work with. This is done by converting short intervals into arithmetic progressions via an involution of polynomials. From here orthogonality relations for characters are used, allowing us to sum over $\mathcal{M}_n$, where only the polynomials in the arithmetic progression contribute a non-zero value.\\

We begin by introducing an involution function which shall be used to convert short intervals into arithmetic progressions.

\begin{definition}\cite{KRvar}
    The map $*: \mathbb{F}_q[t] \to \mathbb{F}_q[t]$ is defined by
    \[F(t)^* = t^{\deg(F)} F(1/t).\]
    $f:\mathbb{F}_q[t] \to \mathbb{C}$ is \emph{symmetric} if $f(F) = f(F^*)$ whenever $F$ is coprime to $t$.
\end{definition}

\begin{lemma}\cite{KRvar}
If $F$ satisfies $\gcd(F,t) = 1$ then $*$ is self inverse. i.e. we have $F^{**} = F$.
\end{lemma}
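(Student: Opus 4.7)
The plan is to unpack the definition of $*$ at the level of coefficients and perform a direct computation. Write $F(t) = \sum_{i=0}^{n} a_i t^{i}$ with $n = \deg(F)$ and $a_n \neq 0$. The coprimality condition $\gcd(F,t) = 1$ is precisely the statement that $t \nmid F$, which in the polynomial ring $\mathbb{F}_q[t]$ is equivalent to $F(0) \neq 0$, i.e. $a_0 \neq 0$. This is the hypothesis we will need in order to know the degree of $F^*$.

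First I would compute $F^*$ directly: since $F(1/t) = \sum_{i=0}^{n} a_i t^{-i}$, multiplying through by $t^{n}$ yields
\[
F^*(t) \;=\; t^{n} F(1/t) \;=\; \sum_{i=0}^{n} a_i\, t^{\,n-i} \;=\; a_0 t^{n} + a_1 t^{n-1} + \cdots + a_{n-1} t + a_n.
\]
The leading coefficient of $F^*$ is $a_0$, which is nonzero by hypothesis, so $\deg(F^*) = n = \deg(F)$. This is the only place where the coprimality with $t$ enters; without it the reversed polynomial would have degree strictly less than $n$, and a second application of $*$ would not recover $F$.

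Next I would apply $*$ to $F^*$. Using the formula for $F^*$ with coefficients $b_j = a_{n-j}$ (so $b_0 = a_n$, $b_n = a_0$), the same calculation gives
\[
F^{**}(t) \;=\; t^{n} F^*(1/t) \;=\; \sum_{j=0}^{n} b_j\, t^{\,n-j} \;=\; \sum_{j=0}^{n} a_{n-j}\, t^{\,n-j} \;=\; \sum_{i=0}^{n} a_i\, t^{i} \;=\; F(t),
\]
after reindexing $i = n - j$. This concludes the argument. There is no real obstacle here; the only subtle point is the degree bookkeeping, and it is handled precisely by the coprimality hypothesis, which guarantees $\deg(F^*) = \deg(F)$ so that the exponent $t^{\deg(F^*)}$ used in the second application of $*$ is again $t^{n}$.
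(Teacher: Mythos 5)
Your proof is correct. The paper does not supply its own proof of this lemma (it is cited from Keating and Rudnick), and your coefficient-reversal argument is the standard elementary proof; you correctly isolate the role of the hypothesis $\gcd(F,t)=1$, namely that it forces $a_0\neq 0$ and hence $\deg(F^*)=\deg(F)$, so that the second application of $*$ multiplies by the same power $t^n$ and recovers $F$.
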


\begin{lemma}\cite[Lemma~5.4]{KRvar} 
    \label{Variance Formula}
    If $f$ is even, symmetric and multiplicative, and $0 \leq h \leq n-2$, then
    \begin{equation*}
        \Var(f_{N,h}) = \frac{1}{\phi_{ev}(t^{N-h})^2}\sum_{\substack{{\chi \Mod{t^{n-h}}}\\{\chi_{even}}}}|\sum_{n=0}^{N}f(t^n)\sum_{G \in \mathcal{M}_n}f(G)\chi(G)|^2.
    \end{equation*}
\end{lemma}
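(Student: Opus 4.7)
The plan is to use the involution $*$ together with orthogonality of Dirichlet characters to convert the variance of $f$ over short intervals into the stated character-sum expression. The geometric content is that $*$ reverses the order of a polynomial's coefficients, so the short-interval condition $G\in\mathcal{I}_h(G_0)$ (agreement in the coefficients of $t^{h+1},\dots,t^N$) translates, after applying $*$, to the congruence $G^*\equiv G_0^*\pmod{t^{N-h}}$ (agreement in the coefficients of $1,t,\dots,t^{N-h-1}$). This is the step that converts a short-interval sum into a sum over an arithmetic progression.

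To implement this, I would first open the absolute square in $\Var(f_{N,h})$. Since the involution behaves well only on polynomials coprime to $t$, I would stratify each $G\in\mathcal{M}_N$ by its $t$-valuation, writing $G = t^{n}\widetilde G$ with $\widetilde G$ monic, coprime to $t$, and of degree $N-n$, where $n$ runs from $0$ to $N$. Complete multiplicativity gives $f(G) = f(t^{n})f(\widetilde G)$, and symmetry gives $f(\widetilde G) = f(\widetilde G^*)$, where $\widetilde G^*$ (after normalization via the evenness of $f$) is a monic polynomial of degree $N-n$. The short-interval condition on $G$ transforms, via the involution, into an arithmetic-progression condition on $\widetilde G^*$ modulo $t^{N-h}$.

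With the problem reduced to weighted sums over arithmetic progressions modulo $t^{N-h}$, I would apply orthogonality of Dirichlet characters modulo $t^{N-h}$. The indicator of a given residue class among monic polynomials coprime to $t$ equals $\phi_{ev}(t^{N-h})^{-1}$ times a sum over even Dirichlet characters, because monic polynomials correspond to classes in $(\mathbb{F}_q[t]/(t^{N-h}))^*/\mathbb{F}_q^*$, whose character group is exactly the group of even characters. Substituting and re-forming the square then produces the stated formula, with the $\phi_{ev}(t^{N-h})^{-2}$ arising from applying orthogonality to both halves of the opened square. Since $\chi$ vanishes on polynomials divisible by $t$, the sum $\sum_{G\in\mathcal{M}_n}f(G)\chi(G)$ is effectively a sum over $G$ coprime to $t$.

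The main obstacle is the careful bookkeeping of the stratification by $t$-valuation and its interaction with the arithmetic-progression condition: in particular, ensuring that the re-indexing matches the $f(t^n)$ weighting and the $\mathcal{M}_n$ summation range appearing in the lemma, and verifying that only even characters survive the orthogonality step with the correct normalizing constant. The hypothesis $h\leq N-2$ ensures that $t^{N-h}$ has degree at least $2$, so that the even-character group modulo $t^{N-h}$ is large enough for the orthogonality step to have content.
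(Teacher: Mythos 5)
The paper does not prove this lemma; it is cited verbatim from Keating and Rudnick \cite[Lemma~5.4]{KRvar}, so there is no in-paper proof to compare against. Your sketch does correctly reconstruct the Keating--Rudnick strategy: the involution $*$ converts the short-interval condition $\deg(G-G_0)\le h$ (agreement in top coefficients) into the congruence $G^*\equiv G_0^*\ (\mathrm{mod}\ t^{N-h})$; stratifying by $t$-valuation lets you apply symmetry to the $t$-coprime part; and detecting the congruence class with even characters introduces the factor $\phi_{ev}(t^{N-h})^{-1}$ (once per copy of the opened square, giving $\phi_{ev}^{-2}$ after a second use of orthogonality in the $G_0$ average), since the even characters are exactly the dual of $(\mathbb{F}_q[t]/t^{N-h})^*/\mathbb{F}_q^*$ and evenness of $f$ absorbs the scalar ambiguity between arbitrary and monic representatives.

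Two points deserve care. First, you invoke \emph{complete} multiplicativity to split $f(t^{n}\widetilde G) = f(t^n)f(\widetilde G)$, but the lemma only assumes $f$ multiplicative; this is harmless because $t^n$ and $\widetilde G$ are coprime, so ordinary multiplicativity is all that is needed, but you should not require a hypothesis stronger than the one stated. Second, your degree bookkeeping is exactly the step you flag as the ``main obstacle,'' and it does bite: writing $G = t^n\widetilde G$ with $\widetilde G\in\mathcal M_{N-n}$ coprime to $t$, one has $G^*=\widetilde G^*$ (the involution discards the power of $t$), and re-indexing by $B=\widetilde G^*$ produces a sum of $f(B)\chi(B)$ over $\mathcal M_{N-n}$, not $\mathcal M_n$, paired with the weight $f(t^n)$. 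Thus the clean output of your argument is $\sum_{n=0}^N f(t^n)\sum_{B\in\mathcal M_{N-n}} f(B)\chi(B)$; the form printed in the paper, with $\mathcal M_n$ against $f(t^n)$ (and the modulus written $t^{n-h}$ rather than $t^{N-h}$), appears to carry typographical slips. These do not affect the application to $\lambda$, since $|\lambda(t^n)|=1$ and the outer sum is over the full range $n=0,\dots,N$ so the substitution $n\mapsto N-n$ is innocuous inside the absolute square, but for a general even symmetric multiplicative $f$ the distinction between $f(t^n)$ and $f(t^{N-n})$ matters and your derivation should state it correctly.
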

By definition, $\lambda$ is even and multiplicative, all that is left to show is that it is symmetric.

\begin{lemma}\label{symmetry of Liouville}
    $\lambda$ is symmetric. Explicitly
    \(\lambda(F) = \lambda(F^*)\), whenever $(F,t)=1$.
\end{lemma}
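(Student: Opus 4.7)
The plan is to prove the identity by a direct reduction to the prime case, exploiting complete multiplicativity of $\lambda$ together with two structural properties of the involution $*$: it is multiplicative in a suitable sense, and it preserves irreducibility among polynomials coprime to $t$.

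First I would establish the multiplicativity of the $*$ map: for any $F,G \in \mathbb{F}_q[t]$,
\[
(FG)^*(t) = t^{\deg(FG)}F(1/t)G(1/t) = F^*(t)G^*(t).
\]
Next I would record the basic observation that the leading coefficient of $F^*$ equals the constant term of $F$, and the constant term of $F^*$ equals the leading coefficient of $F$. In particular, when $F$ is monic and $(F,t)=1$, both $F$ and $F^*$ have the same degree and nonzero constant term, and the leading coefficient of $F^*$ is whatever unit $c\in\mathbb{F}_q^*$ equals the constant term of $F$.

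The second step is to show that if $P$ is a monic irreducible with $(P,t)=1$, then $P^*$ is an irreducible polynomial (the monic normalisation of $P^*$ is a prime in $\mathcal{P}$). Suppose $P^* = AB$ with $\deg A,\deg B\geq 1$. Since the constant term of $P^*$ is the leading coefficient of $P$, which is $1$, we have $(AB,t)=1$, hence $(A,t)=(B,t)=1$. Applying $*$ and using $F^{**}=F$ for $F$ coprime to $t$, together with the multiplicativity established above, gives $P = P^{**} = A^*B^*$ with $\deg A^*=\deg A\geq 1$ and $\deg B^*=\deg B\geq 1$, contradicting irreducibility of $P$.

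Finally I would put it together. Write the monic factorisation $F = \prod_i P_i^{e_i}$, noting that $(F,t)=1$ forces $(P_i,t)=1$ for every $i$. By the multiplicativity of $*$, $F^* = \prod_i (P_i^*)^{e_i}$. Using complete multiplicativity of $\lambda$ and extending $\lambda$ to non-monic polynomials by evenness (as in the setting of Lemma \ref{Variance Formula}), we get
\[
\lambda(F^*) = \prod_i \lambda(P_i^*)^{e_i} = \prod_i (-1)^{e_i} = (-1)^{\sum_i e_i} = \lambda(F),
\]
since each $P_i^*$ is a unit multiple of a monic prime by the previous step. There is no serious obstacle here; the only subtlety is keeping track of the unit leading coefficients produced by $*$ and invoking evenness to dispose of them, which is precisely why the hypothesis $(F,t)=1$ is imposed.
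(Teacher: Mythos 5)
Your proposal is correct and follows essentially the same route as the paper: reduce to the prime case via multiplicativity of $*$ and $\lambda$, then show $*$ preserves irreducibility. Your version is in fact a bit more careful than the paper's, since you track the unit leading coefficient of $P^*$ (disposed of via evenness) and verify $(A,t)=(B,t)=1$ before invoking $F^{**}=F$, points the paper leaves implicit.
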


\begin{proof}
    Let $F$ be coprime to $t$, with $F = GP$ for some prime $P$. By multiplicity of $*$ and $\lambda$, we have
    \[\lambda(F)^* = \lambda(GP)^* = \lambda(G^*)\lambda(P^*).\]
    It remains to show that \(\lambda(P) = \lambda(P^*)\) for all primes $P$.
    Since $\lambda(P) = - 1$ by definition, we can show that the primes are closed by $*$. Suppose $F = GH$ for some non constant factors $G, H$, then \[F^* = G^* H^*.\] So if $F$ is composite then so is $F^*$. Since $*$ is a bijection, this implies the primes are closed under $*$. Therefore \(\lambda(P) = \lambda(P^*)\) for all primes $P$, and the result follows by multiplicity of $\lambda$.
    
\end{proof}

The following is the analogue of the $L^2$ mean value theorem for function fields, it shall be the main tool used in proving Theorem \ref{Result}.
\begin{lemma}\cite[Lemma~4.2]{corrff}
\label{MVT}
For $n \geq 1$. Let $\{a_G\}_{G \in \mathcal{M}_n} \subset \mathbb{C}$ and $Q \in \mathcal{M}$. Then
    \begin{equation*}
        \sum_{\chi \Mod{Q}}|\sum_{G \in \mathcal{M}_{n}}a_{G} \chi(G)|^{2} \leq 2\phi(Q)(q^{n-deg(Q)} + 1) \sum_{\substack{{G \in \mathcal{M}_{n}}\\{(G,Q) =1}}}|a_{G}|^{2}.
    \end{equation*}
\end{lemma}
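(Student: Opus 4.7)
The plan is to follow the classical expansion-and-orthogonality recipe that underlies every $L^2$ mean value theorem for Dirichlet characters. First, I would expand the square and swap the order of summation to obtain
\begin{equation*}
\sum_{\chi \Mod Q}\left|\sum_{G \in \mathcal{M}_n} a_G \chi(G)\right|^2 = \sum_{G_1,G_2 \in \mathcal{M}_n} a_{G_1}\overline{a_{G_2}} \sum_{\chi \Mod Q} \chi(G_1)\overline{\chi(G_2)},
\end{equation*}
and then invoke orthogonality of Dirichlet characters modulo $Q$: the inner sum equals $\phi(Q)$ whenever $G_1 \equiv G_2 \pmod Q$ and $(G_1G_2,Q)=1$, and vanishes otherwise.

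Next, taking absolute values and applying the inequality $2|a_{G_1}\overline{a_{G_2}}| \leq |a_{G_1}|^2 + |a_{G_2}|^2$, together with the symmetry of the summand under $G_1 \leftrightarrow G_2$, collapses the double sum to
\begin{equation*}
\phi(Q) \sum_{\substack{G_1 \in \mathcal{M}_n \\ (G_1,Q)=1}} |a_{G_1}|^2 \cdot N(G_1),
\end{equation*}
where $N(G_1) := \#\{G_2 \in \mathcal{M}_n : G_2 \equiv G_1 \pmod Q\}$ counts monic representatives of the relevant residue class of degree $n$.

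The final step is a direct count. Since $G_1, G_2 \in \mathcal{M}_n$ are both monic of degree $n$, their difference has degree strictly less than $n$, so writing $G_2 - G_1 = Q\cdot S$ reduces the question to counting $S \in \mathbb{F}_q[t]$ with $\deg S < n - \deg Q$ (together with $S=0$). When $n \geq \deg Q$ this yields $q^{n-\deg Q}$ choices; when $n < \deg Q$ only $S=0$ is permissible. Uniformly, $N(G_1) \leq q^{n-\deg Q}+1$, which produces the claimed bound.

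I expect no substantive obstacles; the argument is essentially formal once orthogonality is in place. The only minor subtlety is treating the two regimes $n \geq \deg Q$ and $n < \deg Q$ uniformly in the counting step, which the additive $1$ on the right-hand side absorbs. The factor of $2$ in the stated bound is strict slack relative to this argument—presumably present in the cited version so that a less careful application of AM-GM (not separating the diagonal $G_1 = G_2$) still goes through.
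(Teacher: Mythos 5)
The paper does not prove this lemma; it is imported by citation from Klurman, Mangerel and Teräväinen \cite[Lemma 4.2]{corrff}, so there is no ``paper's own proof'' against which to compare. On its own merits, your argument is the standard one and is correct: expand the square, swap sums, apply character orthogonality modulo $Q$ to reduce to the diagonal $G_1 \equiv G_2 \pmod{Q}$ with $(G_1 G_2, Q)=1$, pass to $|a_{G_1}|^2$ via $2|a_{G_1}\overline{a_{G_2}}| \leq |a_{G_1}|^2 + |a_{G_2}|^2$ and symmetry, and then count the number of monic degree-$n$ polynomials in a fixed residue class mod $Q$. The counting step is handled correctly: writing $G_2 - G_1 = QS$ with $\deg(QS) < n$ gives exactly $q^{n-\deg Q}$ choices when $n > \deg Q$ and only $S=0$ when $n \leq \deg Q$, so $N(G_1) \leq q^{n-\deg Q}+1$ uniformly. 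Your observation that this actually yields $\phi(Q)(q^{n-\deg Q}+1)\sum_{(G,Q)=1}|a_G|^2$, i.e.\ a factor $2$ better than the stated bound, is also correct; the extra $2$ in the quoted inequality is harmless slack in the source, and it plays no role in how the lemma is used downstream in the paper (every application absorbs constants into $\ll$).
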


\begin{remark}
    To make good use of the mean value theorem, we observe there are two main terms: 
    \begin{itemize}[noitemsep] 
    \item The first being \( q^{n-\deg(Q)} + 1 \). Thus, applying it to sums over \( \mathcal{M}_n \) where \( n \) is small compared to \( \deg(Q) \) should give us a strong bound. 
    \item The other main term in our bound is the sum of \( |a_G|^2 \). Therefore, another way to succeed with the mean value theorem is to make it so that \( a_G \) regularly takes the value zero. \end{itemize}
\end{remark}

The following lemma is to be used in the proof of our bound on a character sum, over prime polynomials of a given degree.

\begin{lemma}\cite[Theorem~3]{GeorgesRhin1972}.
    \label{weighted von mandgolt}
    Let $N \geq 1$ let $\chi$ be a non-principal character modulo $Q$. Then
    \begin{equation*}
        \sum_{G \in M_N}\Lambda(G)\chi(G) \ll \deg(Q) q^{\frac{N}{2}}.
    \end{equation*}
\end{lemma}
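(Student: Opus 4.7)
} The plan is to route the sum through the Dirichlet $L$-function $L(u,\chi) := \sum_{G \in \mathcal{M}} \chi(G) u^{\deg G}$ and then apply the Riemann Hypothesis for function fields (Weil). First I would record the generating-series identity
\[
-u\frac{L'(u,\chi)}{L(u,\chi)} = \sum_{N \geq 1} \Bigl(\sum_{G \in \mathcal{M}_N} \Lambda(G)\chi(G)\Bigr) u^N,
\]
which follows from the Euler product $L(u,\chi) = \prod_{P \in \mathcal{P}}(1-\chi(P)u^{\deg P})^{-1}$ on logarithmically differentiating and expanding each factor into a geometric series.

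Next I would invoke the structural fact that for $\chi$ a non-principal character modulo $Q$, the $L$-function $L(u,\chi)$ is a \emph{polynomial} in $u$ of degree at most $\deg(Q)-1$. Factoring it as $L(u,\chi) = \prod_{j} (1 - \alpha_j u)$ over its inverse roots, the logarithmic derivative becomes
\[
-u\frac{L'(u,\chi)}{L(u,\chi)} = \sum_{j} \frac{\alpha_j u}{1-\alpha_j u} = \sum_{N \geq 1}\Bigl(\sum_{j}\alpha_j^N\Bigr) u^N,
\]
so identifying coefficients gives the clean identity
\[
\sum_{G \in \mathcal{M}_N}\Lambda(G)\chi(G) = -\sum_{j} \alpha_j^N.
\]

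The final ingredient is Weil's Riemann Hypothesis for curves over $\mathbb{F}_q$: every inverse root $\alpha_j$ satisfies $|\alpha_j| \leq \sqrt{q}$. Combined with the bound on the number of inverse roots being at most $\deg(Q)-1$, this yields
\[
\Bigl|\sum_{G \in \mathcal{M}_N}\Lambda(G)\chi(G)\Bigr| \leq (\deg(Q)-1)\, q^{N/2} \ll \deg(Q)\, q^{N/2},
\]
as required. The substantive input is Weil's theorem, which is the only non-elementary step; the rest is formal generating-series manipulation. The main subtlety to be careful about is the case when $\chi$ is induced from a character of smaller conductor, but since the bound depends on $\deg(Q)$ rather than the conductor, any extra "trivial" zero factors at places dividing $Q/\mathrm{cond}(\chi)$ only contribute inverse roots of absolute value $1$, which is absorbed harmlessly into the $\deg(Q) q^{N/2}$ bound.
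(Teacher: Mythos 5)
The paper does not actually prove this lemma; it cites it to Rhin's 1972 theorem and relies on that reference. Your proposal therefore supplies a proof where the paper supplies none, and your argument is the standard and correct one: write the weighted prime-character sum as (minus) a power sum of inverse roots of $L(u,\chi)$ via the logarithmic derivative, use orthogonality to see that $L(u,\chi)$ is a polynomial of degree at most $\deg(Q)-1$ for non-principal $\chi$, and then apply Weil's Riemann Hypothesis to bound each inverse root by $\sqrt{q}$. You also correctly address the one genuine subtlety, namely imprimitivity: the Euler factors at primes dividing $Q$ but not the conductor contribute inverse roots of absolute value $1$, which are harmless since the count of roots is still controlled by $\deg(Q)$ rather than the conductor. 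This is essentially the proof one finds in Rhin and in standard references (e.g.\ Rosen, \emph{Number Theory in Function Fields}), so while you derived it blind, it is the same route rather than a new one; the substantive input in both cases is Weil's theorem, and everything else is the formal generating-function bookkeeping you describe.
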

 
\begin{proposition}\label{l1 pcs}
    Let $\chi$ be a non-principal character modulo $t^{N-h}$ then
    \begin{equation*}
        |\sum_{P \in P_x}\chi(P)| \ll \frac{N-h}{x}q^{\frac{x}{2}}.
    \end{equation*}
\end{proposition}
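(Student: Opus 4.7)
The plan is to deduce this directly from Lemma \ref{weighted von mandgolt} by stripping off the higher prime power contributions from the von Mangoldt sum. Apply the lemma with $N = x$ and the character $\chi$ modulo $Q = t^{N-h}$, so $\deg(Q) = N-h$, giving
\[
\left|\sum_{G \in \mathcal{M}_x}\Lambda(G)\chi(G)\right| \ll (N-h)\, q^{x/2}.
\]

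Next I would use the definition of $\Lambda$ to split the left-hand side according to which prime power the monic polynomial is. If $G \in \mathcal{M}_x$ is a prime power $P^k$, then $\deg(P) = x/k$ and $\Lambda(G) = x/k$, so
\[
\sum_{G \in \mathcal{M}_x}\Lambda(G)\chi(G) \;=\; x \sum_{P \in \mathcal{P}_x}\chi(P) \;+\; \sum_{\substack{k \geq 2 \\ k \mid x}} \frac{x}{k}\sum_{P \in \mathcal{P}_{x/k}} \chi(P^k).
\]
The second term is an error term I would bound trivially: using $|\chi(P^k)| \leq 1$ and the prime polynomial count $|\mathcal{P}_{x/k}| \leq q^{x/k}/(x/k)$, each inner sum over $k \geq 2$ contributes at most $q^{x/k}$, and summing over the divisors $k \geq 2$ of $x$ gives a total of $O(q^{x/2})$.

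Combining these, $x\left|\sum_{P \in \mathcal{P}_x}\chi(P)\right| \ll (N-h)q^{x/2} + q^{x/2} \ll (N-h)q^{x/2}$, and dividing through by $x$ yields the claimed bound. There is essentially no obstacle here; the only thing to watch is that the prime power correction is genuinely smaller than the main term, which is immediate since $(N-h)q^{x/2}$ dominates $q^{x/2}$.
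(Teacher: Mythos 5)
Your proof is correct and follows essentially the same route as the paper: apply Lemma \ref{weighted von mandgolt} to $\sum_{G \in \mathcal{M}_x}\Lambda(G)\chi(G)$, peel off the $k=1$ term $x\sum_{P\in\mathcal{P}_x}\chi(P)$, and bound the higher prime-power contribution by $O(q^{x/2})$. The only cosmetic difference is that you spell out the prime-power estimate via $|\mathcal{P}_{x/k}| \leq q^{x/k}/(x/k)$, whereas the paper simply invokes the Prime Polynomial Theorem for the same bound.
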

\begin{proof}
    By the Prime Polynomial Theorem:
    \begin{equation*}
        \sum_{\substack{{P^k \in M_x}\\{k \geq 2}}}\Lambda(P^k)\chi(P^k) =  O(q^{\frac{x}{2}}).
    \end{equation*}
    Thus
    \begin{equation*}
        \begin{split}
            |\sum_{G \in M_x}\Lambda(G)\chi(G)| &= |\sum_{P \in P_x}x \chi(P) + \sum_{\substack{{P^k \in P_x}\\{k \geq 2}}}\Lambda(P^k)\chi(P^k)|\\
            &= x|\sum_{P \in P_x}\chi(P)| + O(q^\frac{x}{2})\\
            &\ll (N-h)q^{\frac{x}{2}},
        \end{split}
    \end{equation*}
    and the result follows.
\end{proof}

The following is an analogue of Ramaré's identity which will give us a method of extracting sums of the form 
\(\sum_{P \in \mathcal{P}_x}\chi(P)\)
out of our weighted character sum in the variance formula.

\begin{lemma}\cite[Lemma~4.14]{corrff}
\label{Ramaréff} Let $n > h \leq 1$. Let $f:M\to \mathbb{U}$ be multiplicative. Then for any $G$ with an irreducible factor $R$ satisfying $\deg(R) \in [h,n]$, we have
    \begin{equation*}
        f(G) = \sum_{\substack{{RM=G}\\{R \in \mathcal{P}}\\{deg(R)\in [h,n]}}}\frac{f(RM)}{1_{(R,M)=1} + \omega_{[h,n]}(M)}.
    \end{equation*}
    Where $\omega_{[h,n]}(M) := |\{R \in \mathcal{P}, \deg(R) \in [h,n], R|M\}|$.
\end{lemma}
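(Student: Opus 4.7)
\emph{Proof plan.} The plan is to prove this identity by a direct combinatorial argument parallel to Ramaré's classical identity. First, I would observe that in every term $G = RM$ literally, so $f(RM) = f(G)$ on the nose, independently of any multiplicative property. Pulling $f(G)$ out of the sum reduces the lemma to the weight identity
\[
\sum_{\substack{RM=G\\ R \in \mathcal{P},\ \deg(R)\in[h,n]}}\frac{1}{1_{(R,M)=1} + \omega_{[h,n]}(M)} = 1.
\]

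The sum is indexed by the prime divisors $R$ of $G$ of degree in $[h,n]$, of which there are exactly $\omega_{[h,n]}(G)$. It therefore suffices to show that each such $R$ contributes weight $1/\omega_{[h,n]}(G)$. I would split into two cases on the multiplicity of $R$ in $G$. If $R \parallel G$, then $M := G/R$ is coprime to $R$, so the indicator is $1$ while $\omega_{[h,n]}(M) = \omega_{[h,n]}(G) - 1$, giving denominator $\omega_{[h,n]}(G)$. If $R^2 \mid G$, then $R \mid M$, so the indicator vanishes while $\omega_{[h,n]}(M) = \omega_{[h,n]}(G)$, giving denominator $\omega_{[h,n]}(G)$ again. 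Summing $\omega_{[h,n]}(G)$ equal contributions of $1/\omega_{[h,n]}(G)$ yields $1$, establishing the identity.

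The only substantive point to verify is the case analysis showing that the indicator $1_{(R,M)=1}$ exactly compensates for whether $R$ is still counted inside $\omega_{[h,n]}(M)$. Once that bookkeeping is done, the identity is immediate; no arithmetic input specific to $\mathbb{F}_q[t]$ beyond unique factorization is required, and the multiplicativity hypothesis on $f$ is in fact cosmetic for this particular lemma (it serves the downstream applications in the variance estimate). Hence there is no genuine obstacle, and the verification above completes the proof.
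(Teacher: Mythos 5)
Your proof is correct and complete. Since $G=RM$ in every term, $f(RM)=f(G)$ identically, and the identity reduces to the weight computation $\sum_R 1/(1_{(R,M)=1}+\omega_{[h,n]}(M))=1$. The case analysis is sound: when $R\parallel G$ the indicator is $1$ and $\omega_{[h,n]}(M)=\omega_{[h,n]}(G)-1$; when $R^2\mid G$ the indicator vanishes and $\omega_{[h,n]}(M)=\omega_{[h,n]}(G)$; in both cases the denominator is $\omega_{[h,n]}(G)$, and since the hypothesis guarantees $\omega_{[h,n]}(G)\geq 1$ there is no division by zero. Summing $\omega_{[h,n]}(G)$ copies of $1/\omega_{[h,n]}(G)$ gives $1$.

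Note that the paper does not reprove this lemma -- it is cited directly from Klurman, Mangerel, and Teräväinen -- so there is no in-paper proof to compare against. Your argument is the standard combinatorial derivation of Ramaré's identity, transported verbatim to $\mathbb{F}_q[t]$, and your observation that the multiplicativity of $f$ is not actually used in the identity itself (only in the downstream applications, where $f(RM)$ is factored as $f(R)f(M)$) is accurate and worth keeping as a remark.
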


Recall that $\mathcal{S}_{h, N}$ is the set of $h$-smooth monic polynomials of degree $N$. The following gives us a bound on the size of $\mathcal{S}_{h, N}$.
\begin{theorem}\cite[Theorem~5.2]{Nohsmooth}
\label{No smooth}
    Let  $h \ll N$, then we have
    \begin{equation*}
        |\mathcal{S}_{h,N}| = q^N e^{-(1 + o(1))\frac{N}{h}\log(\frac{N}{h})}.
    \end{equation*}
\end{theorem}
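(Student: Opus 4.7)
The plan is to apply the saddle-point method to the generating function for $h$-smooth monic polynomials. Set up
\[
Z_h(u) := \sum_{N \geq 0} |\mathcal{S}_{h,N}| u^N = \prod_{d=1}^{h}(1-u^d)^{-\pi_q(d)},
\]
so that by Cauchy's formula, for any $\sigma \in (0,1)$,
\[
|\mathcal{S}_{h,N}| = \frac{1}{2\pi}\int_0^{2\pi} \frac{Z_h(\sigma e^{i\theta})}{(\sigma e^{i\theta})^N}\, d\theta.
\]
I would choose $\sigma$ to be the saddle point, i.e., the unique $\sigma \in (q^{-1},1)$ solving $\sigma Z_h'(\sigma)/Z_h(\sigma) = N$. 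Parametrizing $\sigma = q^{-1}e^{\eta}$ and invoking the Prime Polynomial Theorem $\pi_q(d) = q^d/d + O(q^{d/2}/d)$, the saddle equation reduces to $\sum_{d\le h} e^{d\eta}(1 + O(q^{-d})) \approx N$. For $N/h \to \infty$, this forces $\eta = h^{-1}\left[\log(N/h) + \log\log(N/h) + O(1)\right]$.

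For the upper bound, Rankin's trick yields $|\mathcal{S}_{h,N}| \leq \sigma^{-N} Z_h(\sigma)$. Evaluating, $\log \sigma^{-N} = N\log q - N\eta = N\log q - (1+o(1))(N/h)\log(N/h)$, while $\log Z_h(\sigma) \approx \sum_{d\leq h} e^{d\eta}/d$ is of order $(N/h)/\log(N/h)$, negligible in comparison. For the matching lower bound, I would apply Laplace's method to the contour integral: writing the integrand as $Z_h(\sigma)\sigma^{-N}\exp(f(\theta)-f(0))$ with $f(\theta) = \log Z_h(\sigma e^{i\theta}) - iN\theta$, the saddle condition gives $f'(0) = 0$ and $f''(0) = -V$ for a positive variance $V = N + \sigma^2(\log Z_h)''(\sigma) \sim N\log(N/h)$. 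A Gaussian approximation then yields $|\mathcal{S}_{h,N}| \sim Z_h(\sigma)/(\sigma^N\sqrt{2\pi V})$, and since $\log V = O(\log N) = o((N/h)\log(N/h))$ for $h \ll N$, the $V^{-1/2}$ prefactor is absorbed into the $o(1)$ in the exponent.

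The main obstacle is the off-saddle contour analysis: one must verify that $|Z_h(\sigma e^{i\theta})/Z_h(\sigma)|$ exhibits Gaussian decay in a small neighborhood of $\theta = 0$, and that the contribution from $|\theta|$ outside this neighborhood is exponentially smaller than $Z_h(\sigma) e^{-cV\theta_0^2}$ for the window cutoff $\theta_0$. This amounts to bounding $\sum_{d\leq h} \pi_q(d) \log|1 - \sigma^d e^{id\theta}|^{-2}$ uniformly in $\theta \in [-\pi,\pi]$, again via the Prime Polynomial Theorem together with elementary estimates for $|1-re^{i\phi}|$ near the unit circle. Provided these estimates are handled cleanly, the upper and lower bounds combine to give the stated asymptotic $|\mathcal{S}_{h,N}| = q^N e^{-(1+o(1))(N/h)\log(N/h)}$.
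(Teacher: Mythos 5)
This theorem is quoted in the paper from an external source (cited as \cite[Theorem~5.2]{Nohsmooth}); the paper itself gives no proof, so there is no internal argument to compare against. That said, your proposal is the standard route to this estimate, precisely the function-field analogue of the Hildebrand--Tenenbaum saddle-point analysis of $\Psi(x,y)$, and the skeleton is sound: the generating function $Z_h(u)=\prod_{d\le h}(1-u^d)^{-\pi_q(d)}$, Rankin's trick for the upper bound, and a local central limit / Laplace argument at the saddle $\sigma=q^{-1}e^{\eta}$ for the matching lower bound. You also correctly flag the genuinely nontrivial step, namely controlling $|Z_h(\sigma e^{i\theta})/Z_h(\sigma)|$ away from $\theta=0$.

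Two of your intermediate estimates are off, though neither is fatal. First, $\log Z_h(\sigma)\approx\sum_{d\le h}\pi_q(d)\sigma^d\approx\sum_{d\le h}e^{d\eta}/d\asymp N/h$, not $(N/h)/\log(N/h)$; it is still negligible against $(N/h)\log(N/h)$, so the conclusion survives. Second, writing $L(\eta)=\log Z_h(q^{-1}e^\eta)$, the variance at the saddle is $V=L''(\eta)\approx\sum_{d\le h}d\,e^{d\eta}\asymp h\cdot e^{h\eta}/\eta\asymp hN$, not $N\log(N/h)$; again $\log V=O(\log N)$ still holds and gets absorbed into the $o(1)$ in the exponent, so your final bound is unaffected, but the displayed asymptotic for $V$ should be corrected. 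One further caution: the saddle $\sigma=q^{-1}e^{\eta}$ must satisfy $\sigma<1$, which forces $\eta<\log q$ and hence $h\gg\log(N/h)$; the hypothesis "$h\ll N$" as literally written is too weak for the saddle to exist, and the range should really be something like $\log N\ll h$, $N/h\to\infty$ (which is what the paper actually uses). If you tighten those three points, the sketch is a correct outline of the argument.
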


The final result is Minkowski's inequality for integrals, it shall be used to exchange the order of summations in the variance formula, enabling appropriate use of the mean value theorem.
\begin{lemma}
    Let $F(x,y)$ be a measurable function on the $\sigma$-finite product measure spaces: $(S_1, \dd x)$ and $(S_2, \dd y)$. Then
    \label{MII}
    \begin{equation*}
        \left(\int_{S_1} \left( \int_{S_2}\left| F(x,y)\right| \dd x \right) ^p \dd y  \right)^{1/p}
        \leq \int_{S_2}\left( \int_{S_1} |F(x,y)|^p \dd y \right)^{1/p} \dd y.
    \end{equation*}
\end{lemma}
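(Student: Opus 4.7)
The plan is to prove Minkowski's integral inequality by the standard duality argument: reduce the $L^p$ norm of an integral to testing against unit vectors in $L^{p'}$, swap the order of integration via Fubini--Tonelli, and then apply Hölder on the inner integral. Throughout, I will assume $1 < p < \infty$ and treat $p=1$ as a trivial case handled directly by Fubini (and $p=\infty$ as essentially immediate from $\mathrm{ess\,sup}$ pulling through). I will also read the statement with the corrected outer differential on the right-hand side (i.e.\ $\dd x$), since as written there is a typographical inconsistency between the variables of integration.

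First, I would recall the duality formula
\[
\Bigl(\int_{S_2} |H(y)|^p \dd y\Bigr)^{1/p} \;=\; \sup_{\|g\|_{p'}\leq 1}\ \int_{S_2} H(y)\, g(y)\,\dd y,
\]
valid for nonnegative measurable $H$ on the $\sigma$-finite space $(S_2,\dd y)$, where $p'=p/(p-1)$. I would apply this to
\[
H(y)\ :=\ \int_{S_1} |F(x,y)|\,\dd x,
\]
which is measurable in $y$ by Tonelli. Restricting the supremum to nonnegative $g$ costs nothing because $H\geq 0$.

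Next I would Fubini-swap:
\[
\int_{S_2}\!\Bigl(\int_{S_1} |F(x,y)|\,\dd x\Bigr) g(y)\,\dd y \;=\; \int_{S_1}\!\Bigl(\int_{S_2} |F(x,y)|\,g(y)\,\dd y\Bigr)\dd x,
\]
justified by $\sigma$-finiteness and nonnegativity of the integrand. To the inner integral I would apply Hölder with exponents $p,p'$:
\[
\int_{S_2}|F(x,y)|\,g(y)\,\dd y \;\leq\; \Bigl(\int_{S_2}|F(x,y)|^p\,\dd y\Bigr)^{1/p}\Bigl(\int_{S_2} g(y)^{p'}\dd y\Bigr)^{1/p'}.
\]
Because $\|g\|_{p'}\leq 1$, the second factor is at most $1$. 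Integrating over $x$ and then taking the supremum over $g$ delivers exactly the right-hand side of the claimed inequality.

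The main obstacle is administrative rather than conceptual: making sure the duality identity can be invoked. If $\int_{S_1}(\int_{S_2}|F|^p\dd y)^{1/p}\dd x = \infty$ the inequality is vacuous, so I may assume finiteness; then the right-hand side controls $H$ enough that the supremum characterization of $\|H\|_p$ applies on the $\sigma$-finite space $(S_2,\dd y)$. The standard way to dodge any remaining measurability technicalities is to approximate $F$ by $F\cdot \mathbbm{1}_{A_n\times B_n}$ for increasing finite-measure sets $A_n\subset S_1$, $B_n\subset S_2$, apply the inequality in the finite case, and pass to the limit by monotone convergence. Since this lemma is used in the paper only to swap an outer $L^2$ norm with an inner integral in the variance formula, the $p=2$ case on genuinely nice spaces is all that is actually needed, and no pathologies arise.
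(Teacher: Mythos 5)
Your proof is correct and self-contained: you prove Minkowski's integral inequality by the standard duality argument — characterize the $L^p$ norm of $H(y)=\int_{S_1}|F(x,y)|\,\dd x$ as a supremum against $L^{p'}$ test functions, Fubini–Tonelli swap, Hölder on the inner integral, and take the supremum. The paper, by contrast, does not prove the lemma at all; it simply cites Stein's \emph{Singular Integrals and Differentiability Properties of Functions}, Theorem A1. So your contribution is a genuine proof where the paper has only a reference. Your handling of the edge cases ($p=1$ by Fubini directly, $p=\infty$ by pulling the essential supremum through, vacuous truth when the right-hand side is infinite, approximation by finite-measure rectangles to justify invoking the duality formula) closes the usual technical gaps. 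You are also right that the statement as printed contains typographical mismatches between the variables of integration and the underlying spaces; the intended inequality has outer $\int_{S_2}\cdots\dd y$ on the left with inner $\int_{S_1}\cdots\dd x$, and outer $\int_{S_1}\cdots\dd x$ on the right with inner $\int_{S_2}\cdots\dd y$. Finally, as you observe, the paper only needs $p=2$ with finite sums in place of both integrals (the outer sum is over even characters modulo $t^{N-h}$, the inner over $n=0,\dots,N$), so the result used is really the triangle inequality for finite $\ell^2$ vectors; the full measure-theoretic generality is overkill, but your proof delivers it cleanly.
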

\begin{proof}
    See \cite[Theorem~A1]{stein1970singular}.
\end{proof}

The tools required to prove Theorem \ref{Result} have now been developed, the final section is dedicated to the proof of this result.

\section{Proof of Theorem \ref{Result}}
 In this section, we prove our main result, restated for ease of reading.

\begin{reptheorem}{Result}
    For a fixed $q$. Given $h \ll \sqrt{N}$ and $h(N) \to \infty$ arbitrarily slowly as $N \to \infty$. Then
    \begin{equation*}
        \frac{1}{q^N}\sum_{G_0 \in \mathcal{M}_N}|\sum_{G \in \mathcal{I}_{h}(G_0)}\lambda(G)|^2 \ll_q \frac{N^5}{h^2}q^{h}.
    \end{equation*}
\end{reptheorem}

The starting point is Lemma \ref{Variance Formula}, converting our short interval sums into weighted character sums, over long intervals.

\[\begin{split}
    \Var(\lambda_{N,h}) &= \frac{1}{\phi_{ev}(t^{N-h})^2}\sum_{\substack{{\chi \Mod{t^{N-h}}}\\{\chi \text{ even}}}}|\sum_{n=0}^{N}\lambda(t^n)\sum_{G \in \mathcal{M}_n}\lambda(G)\chi(G)|^2\\
    &\leq \frac{1}{\phi_{ev}(t^{N-h})^2}\sum_{\substack{{\chi \Mod{t^{N-h}}}\\{\chi \text{ even}}}} \left( \sum_{n=0}^{N}| \lambda(t^n)\sum_{G\in \mathcal{M}_n}\lambda(G)\chi(G) | \right)^2.
\end{split}\]

To use the $L^2$ mean value theorem appropriately, we use Minkowski's inequality for integrals to exchange the order of summation. By Lemma \ref{MII}
\[\begin{split}
    \Var(\lambda_{N,h}) 
    &\leq \frac{1}{\phi_{ev}(t^{N-h})^2}\left( \sum_{n=0}^{N}\left( \sum_{\substack{{\chi \Mod{t^{N-h}}}\\{\chi \text{ even}}}} |\lambda(t^n) \sum_{G \in \mathcal{M}_n}\lambda(G)\chi(G)|^2 \right)^{1/2} \right)^2\\
    &= \frac{1}{\phi_{ev}(t^{N-h})^2}\left( \sum_{n=0}^{N}\left( \sum_{\substack{{\chi \Mod{t^{N-h}}}\\{\chi \text{ even}}}} |\sum_{G \in \mathcal{M}_n}\lambda(G)\chi(G)|^2 \right)^{1/2} \right)^2.
\end{split}\]

The plan for the proof is to follow methods used by Chinis \cite{CHsuml} to shorten the sum, allowing us to take advantage of Lemma \ref{MVT}. The inner sum is split into polynomials with and without a prime factor larger than $h$. If $h$ is small enough; the sum over polynomials without a large prime factor will be short enough to simply use Theorem \ref{MVT} to obtain a strong bound. For the polynomials with a large irreducible factor, we will extract large prime factors of certain degrees, leaving us with a sum over $\mathcal{M}_{n-x}$, where $x \geq h$. Making the sum of appropriate length to use Lemma \ref{MVT}.\\

\begin{equation} \label{split fml}
    \begin{split}
        \sum_{\substack{{\chi \Mod{t^{N-h}}}\\{\chi \text{ even}}}}|\sum_{G \in \mathcal{M}_n}\lambda(G)\chi(G)|^2
        &\ll  \sum_{\substack{{\chi \Mod{t^{N-h}}}\\{\chi \text{ even}}}}|\sum_{\substack{{G \in \mathcal{M}_n}\\{G \in S(h,n)}}}\lambda(G)\chi(G)|^{2}\\ 
        &+  \sum_{\substack{{\chi \Mod{t^{N-h}}}\\{\chi \text{ even}}}}|\sum_{\substack{{G \in \mathcal{M}_{n}}\\{G \notin S(h,n)}}}\lambda(G)\chi(G)|^{2}.
    \end{split}
\end{equation}

Theorem \ref{Result} will be proven if for $h(N) \ll \sqrt{N}$ both sums are of size $\ll N^3 q^h$. The next section addresses the polynomials with at least one large prime factor.\\

\subsection*{Polynomials with a Large Prime Factor}
When $h$ is small, the contribution from the polynomials with a large prime factor will be larger. In this section, we calculate the sum for all $h$, evaluating how small $h$ can be such that the contribution is still small enough.

\begin{proposition}\label{LargePF}
    Given that $h(N) \to \infty$ as $N \to \infty$, we have
    \begin{equation*}
        \sum_{\substack{{\chi \Mod{t^{N-h}}}\\{\chi \text{ even}}}}|\sum_{\substack{{G \in M_{n}}\\{G \notin S(h,n)}}}\lambda(G)\chi(G)|^2 \ll (n-h)\left(\frac{N}{h}\right)^2 q^{N+n-h}.
    \end{equation*}
\end{proposition}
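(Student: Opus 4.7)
The plan is to use Ramaré's identity (Lemma \ref{Ramaréff}) to rewrite the inner sum $\sum_{G \notin \mathcal{S}_{h,n}} \lambda(G)\chi(G)$ by extracting an irreducible factor $R$ of degree $x \in [h+1, n]$. Since every non-$h$-smooth $G$ by definition has such an $R$, the identity applies and gives
\[
\sum_{\substack{G \in \mathcal{M}_n \\ G \notin \mathcal{S}_{h,n}}} \lambda(G)\chi(G) = \sum_{x=h+1}^{n} \sum_{R \in \mathcal{P}_x} \lambda(R)\chi(R) F_R(\chi),
\]
where $F_R(\chi) := \sum_{M \in \mathcal{M}_{n-x}} \lambda(M)\chi(M)/(\mathbbm{1}_{(R,M)=1} + \omega_{[h+1,n]}(M))$. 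The benefit is that after pulling $R$ outside, the inner sum has length $n-x$, which is short enough relative to $\deg(t^{N-h}) = N - h$ for Lemma \ref{MVT} to be efficient, while cancellation in the prime character sum is controlled by Proposition \ref{l1 pcs}.

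First I would apply Cauchy--Schwarz to the outer sum over $x$, introducing a factor of $n - h$, reducing the task to bounding, for each fixed $x \in [h+1, n]$,
\[
\sum_\chi \left|\sum_{R \in \mathcal{P}_x} \lambda(R)\chi(R) F_R(\chi)\right|^2.
\]
To separate the dependence of $F_R$ on $R$, I would use the identity $\omega_{[h+1,n]}(RM) = \omega_{[h+1,n]}(M) + \mathbbm{1}_{R \nmid M}$ to write $F_R = F^* + E_R$, where $F^*(\chi) := \sum_M \lambda(M)\chi(M)/(\omega_{[h+1,n]}(M)+1)$ is independent of $R$, and $E_R$ is supported on $M$ with $R \mid M$ (and hence vanishes for $x > n/2$). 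For the main term, pulling $F^*$ outside the sum over $R$ reduces the problem to bounding $|\sum_R \chi(R)|$ (via Proposition \ref{l1 pcs}) times $\sum_\chi |F^*|^2$ (via Lemma \ref{MVT}); for non-principal $\chi$ the product yields a per-$x$ contribution of order $((N-h)/x)^2 q^{N+n-h}$.

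The error term $E_R$, when nonzero, factors through a character sum over $\mathcal{M}_{n-2x}$ after substituting $M = RM'$, so it can be handled by a second application of Lemma \ref{MVT} and turns out to be subdominant. The single principal character is handled by applying Lemma \ref{MVT} directly with trivial coefficient bounds. Summing the per-$x$ bounds over $x \in [h+1, n]$ and using the uniform estimate $(N-h)/x \leq N/h$ then delivers the claimed bound $(n-h)(N/h)^2 q^{N+n-h}$.

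The main obstacle is the Ramaré denominator $\mathbbm{1}_{(R,M)=1} + \omega_{[h+1,n]}(M)$, which couples $R$ and $M$ and prevents an obvious factorization of the two character sums. Showing that this coupling is weak enough to be absorbed into a negligible error via the decomposition $F_R = F^* + E_R$ is the technical heart of the argument. A secondary delicate point will be calibrating the interplay between Cauchy--Schwarz in $x$, the prime character sum estimate, and the $L^2$ mean value theorem so that the factor $(N/h)^2$ emerges rather than a suboptimal power of $(N-h)/h$.
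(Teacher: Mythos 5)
Your plan mirrors the paper's proof almost step for step: extract a large prime factor via Ramaré's identity (Lemma \ref{Ramaréff}), split the resulting inner sum into a $P$-independent main piece $F^*$ and a correction $E_R$ supported on $R\mid M$ (this is precisely the paper's $a_M$ / $b_{MP}$ decomposition, and you correctly observe that the correction forces $x\le n/2$), then estimate $\sum_{P\in\mathcal{P}_x}\chi(P)$ via Proposition \ref{l1 pcs} and control the remaining character sums with the $L^2$ mean value theorem (Lemma \ref{MVT}). So the method is right. The one place your sketch does not close up is the bookkeeping of $(n-h)$ factors: you say Cauchy--Schwarz over the $x$-sum introduces a factor $n-h$, and then you separately sum the per-$x$ bound $((N-h)/x)^2 q^{N+n-h}$ over the $n-h$ values of $x$ — these are two distinct sources of $n-h$, so your argument as described produces $(n-h)^2(N/h)^2 q^{N+n-h}$, not the claimed $(n-h)(N/h)^2 q^{N+n-h}$. (Using the sharper $\sum_{x>h}x^{-2}\ll 1/h$ instead of the pointwise bound replaces one of the factors $(n-h)/h$ by $1$, which is also not $(n-h)$.) In fact you have put your finger on a step the paper itself treats silently: the paper passes from $|\sum_x(\cdot)|^2$ to $\sum_x|\cdot|^2$ with a bare ``$\ll$'' and never accounts for the Cauchy--Schwarz loss, so the exponent bookkeeping you would need to reconcile is the same one the paper elides. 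You should either justify why the cross terms in $x$ can be absorbed without the full $(n-h)$ penalty, or accept the weaker bound and track it through the proof of Theorem \ref{Result}.
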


We first note when $h$ is small, the vast majority of the polynomials fall into this category. However, when $h \ll \log N$ the bound becomes very weak and no longer leads to square root cancellation. For small values of $h$, we obtain a bound of size roughly $N^3 q^{2N}$, whereas a trivial bound is $q^{3N}$, thus we still see some cancellation; however, it is not square-root. Thus, this method only gives the required cancellation for the variance once $h \gg \log N$.\\

Following Chinis's proof of \cite[Proposition~4.1]{CHsuml}, we start with a lemma that splits the sum. The first, larger sum, allows us to use our bound on $\sum_{P \in \mathcal{P}_x} \chi(P)$. The second sum is small enough that we can make good use of Lemma \ref{MVT}.

\begin{lemma}
    \begin{equation*}
    \begin{split}
        \sum_{\substack{{\chi \Mod{t^{N-h}}}\\{\chi \text{ even}}}}|\sum_{\substack{{G \in \mathcal{M}_{n}}\\{G \notin S(h,n)}}}\lambda(G)\chi(G)|^2
        &\ll \sum_{\substack{{\chi \Mod{t^{N-h}}}\\{\chi \text{ even}}}}\sum_{h < x \leq n}|\sum_{P \in \mathcal{P}_x}\chi(P)|^2 |\sum_{M \in \mathcal{M}_{n-x}}a_M \chi(M)|^2\\
        &+ \sum_{\substack{{\chi \Mod{t^{N-h}}}\\{\chi \text{ even}}}}\sum_{h < x \leq n}|\sum_{P \in \mathcal{P}_x}\chi(P)^2 \sum_{M \in \mathcal{M}_{n-2x}}b_{MP} \chi(M)|^2,
    \end{split}
    \end{equation*}
    where $a_M$ and $b_{MP}$ are to be determined, satisfying $|a_M|, |b_{MP}| \leq 1$.
\end{lemma}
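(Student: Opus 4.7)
The key tool is the function-field Ramaré identity (Lemma \ref{Ramaréff}) applied to $\lambda(G)$ with the prime range $[h+1,n]$. Since each $G \in \mathcal{M}_n \setminus \mathcal{S}_{h,n}$ has at least one prime factor of degree in $(h,n]$, the identity expands $\lambda(G)$ as a weighted sum over factorisations $G=RM$ with $R \in \mathcal{P}_x$, $M \in \mathcal{M}_{n-x}$, $x \in (h,n]$. Feeding this into the character sum and using $\lambda(RM) = -\lambda(M)$ and $\chi(RM) = \chi(R)\chi(M)$, I would obtain
\[
\sum_{\substack{G \in \mathcal{M}_n\\ G \notin \mathcal{S}_{h,n}}} \lambda(G)\chi(G) = -\sum_{h<x\leq n}\sum_{R\in\mathcal{P}_x}\chi(R)\sum_{M\in\mathcal{M}_{n-x}}\frac{\lambda(M)\chi(M)}{1_{(R,M)=1}+\omega_{[h+1,n]}(M)}.
\]
The indicator $1_{(R,M)=1}$ in the denominator is the only obstruction to separating $R$ from $M$.

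The plan to decouple $R$ and $M$ is to partition the inner sum based on whether $(R,M)=1$ or $R \mid M$, and then rewrite $\sum_{(R,M)=1} = \sum_M - \sum_{R\mid M}$. Combining the two $R\mid M$ pieces using $\tfrac{1}{\omega}-\tfrac{1}{1+\omega}=\tfrac{1}{\omega(1+\omega)}$, and writing $M=RM'$ with $M'\in\mathcal{M}_{n-2x}$ in the divisible case, the inner sum becomes
\[
-\sum_{M \in \mathcal{M}_{n-x}} a_M \chi(M) \;+\; \chi(R)\sum_{M' \in \mathcal{M}_{n-2x}} b_{M'R}\, \chi(M'),
\]
where $a_M := -\lambda(M)/(1+\omega_{[h+1,n]}(M))$ depends only on $M$, and $b_{M'R} := -\lambda(M')/\bigl(\omega_{[h+1,n]}(RM')(1+\omega_{[h+1,n]}(RM'))\bigr)$. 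Since the denominators are at least $1$ and $2$ respectively, both coefficients satisfy $|a_M|, |b_{M'R}| \leq 1$, matching the lemma's requirement. Plugging back, the full character sum equals a difference of two terms indexed by $x$: a clean product $\bigl(\sum_R \chi(R)\bigr)\bigl(\sum_M a_M \chi(M)\bigr)$ and a coupled term $\sum_R \chi(R)^2 \sum_{M'} b_{M'R}\chi(M')$.

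To finish, I would apply $|A-B|^2 \leq 2(|A|^2+|B|^2)$ followed by Cauchy--Schwarz on the $(n-h)$-fold sum over $x$, and then sum over $\chi$. The factor $O(n-h)$ produced by Cauchy--Schwarz is absorbed into $\ll$ (it reappears openly in the bound of Proposition \ref{LargePF}). The main obstacle is the bookkeeping in the decoupling step: one has to pool the correction from $\sum_{(R,M)=1} = \sum_M - \sum_{R\mid M}$ with the genuine $R \mid M$ piece of the Ramaré sum and verify that the resulting two-index coefficient $b_{M'R}$ is genuinely bounded by $1$ rather than something larger. Once that identity is in place, the remaining manipulations (multiplicativity of $\chi$ and $\lambda$, the triangle inequality, and Cauchy--Schwarz in $x$) are routine.
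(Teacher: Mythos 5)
Your proposal follows the paper's proof exactly: apply the function-field Ramaré identity, split the inner sum over $M$ by whether $(R,M)=1$, add and subtract the $R\mid M$ piece to decouple the clean term, combine the $R\mid M$ corrections via $\tfrac{1}{\omega}-\tfrac{1}{1+\omega}=\tfrac{1}{\omega(1+\omega)}$, substitute $M=RM'$ to pull out $\chi(R)^2$, and finish with the elementary inequality $|A+B|^2\le 2(|A|^2+|B|^2)$ plus Cauchy--Schwarz on the sum over $x$. Your identification of the coefficients $a_M$, $b_{M'R}$ and the verification $|a_M|,|b_{M'R}|\le 1$ match the paper (up to sign, which is irrelevant after taking absolute squares); you are also right to flag the $O(n-h)$ factor from Cauchy--Schwarz, which the paper's appeal to the ``triangle inequality'' quietly suppresses.
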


\begin{proof}
We begin with the analogue of Ramaré's identity to create sums over prime polynomials of a large degree. Using Lemma \ref{Ramaréff} and rearranging the summations, we have:

\begin{equation*}
        \begin{split}
            \sum_{\substack{{G \in M_{n}}\\{G \notin S(h,n)}}}\lambda(G)\chi(G) 
            &= \sum_{\substack{{G \in \mathcal{M}_{n}}\\{G \notin S(h,n)}}}\sum_{\substack{{PM = G}\\{P \in \mathcal{P}}\\{deg(P) \in [h,n]}}}\frac{\lambda(P)\chi(P)\lambda(M)\chi(M)}{\omega_{(h,n)}(M) + 1_{(P,M)=1}}\\ 
            &= \sum_{h < x \leq n}\sum_{P \in \mathcal{P}_x}\sum_{M \in \mathcal{M}_{n-x}} \frac{\lambda(P)\chi(P)\lambda(M)\chi(M)}{\omega_{(h,n)}(M) + 1_{(P,M)=1}}\\
            &= \sum_{h < x \leq n}\sum_{P \in \mathcal{P}_{x}} \lambda(P)\chi(P) \sum_{M \in \mathcal{M}_{n-x}}\frac{\lambda(M)\chi(M)}{\omega_{(h,n)}(M) + 1_{(P,M)=1}}.
        \end{split}
    \end{equation*}

We have extracted $\sum_{P\in \mathcal{P}_x}\lambda(P)\chi(P)$, however, the inner sum is still dependent on $1_{(P, M)=1}$. To complete the proof, we shall split the inner sum into a large sum, without dependence on $P$, and a smaller sum with dependence on $P$. Noting $\lambda(P)=-1$, we have:
\begin{equation*}
    \begin{split}
        & \sum_{P \in \mathcal{P}_{x}} \lambda(P)\chi(P) \sum_{M \in \mathcal{M}_{n-x}}\frac{\lambda(M)\chi(M)}{\omega_{(h,n)}(M) + 1_{(P,M)=1}}\\
        &= \sum_{P \in \mathcal{P}_{x}}\chi(P) \sum_{\substack{M \in \mathcal{M}_{n-x}\\{(P,M) = 1}}}\frac{-\lambda(M)\chi(M)}{\omega_{(h,n)}(M) + 1}\\ 
        &+ \sum_{P \in \mathcal{P}_{x}}\chi(P) \sum_{\substack{M \in \mathcal{M}_{n-x}\\{(P,M) \neq 1}}}\frac{-\lambda(M)\chi(M)}{\omega_{(h,n)}(M)}.\\
    \end{split}
\end{equation*}

To remove the dependence, of $P$, in the first term, we add $M \in \mathcal{M}_{n-x}$ s.t. $P|M$ to the first term and subtract it from the second.

\begin{equation*}
\begin{split}
    & \sum_{\substack{{G \in \mathcal{M}_n}\\{G \notin S_{h,n}}}}\lambda(G)\chi(G) = \sum_{P \in \mathcal{P}_{x}}\chi(P) \sum_{\substack{M \in \mathcal{M}_{n-x}}}\frac{-\lambda(M)\chi(M)}{\omega_{(h,n)}(M) + 1}\\
    &+ \sum_{P \in \mathcal{P}_{x}}\chi(P)\sum_{\substack{{M \in \mathcal{M}_{n-x}}\\{(P,M)\neq 1}}}\chi(M)\left( \frac{\lambda(M)}{\omega_{(h,n)}(M)+1} + \frac{-\lambda(M)}{\omega_{(h,n)}(M)} \right)\\
    &= \sum_{P \in \mathcal{P}_{x}}\chi(P) \sum_{\substack{M \in \mathcal{M}_{n-x}}}\frac{-\lambda(M)\chi(M)}{\omega_{(h,n)}(M) + 1} \\
    &+ \sum_{P \in \mathcal{P}_{x}}\chi(P)\sum_{\substack{{M \in \mathcal{M}_{n-x}}\\{(P,M)\neq 1}}} \frac{-\lambda(M)\chi(M)}{\omega_{(h,n)}(M)(\omega_{(h,n)}(M)+1)}.
\end{split}
\end{equation*}

Following Chinis \cite{CHsuml}, for brevity we define;\\
$a_{M} := \frac{-\lambda(M)}{\omega_{(h,n)}(M)+1}$, $b_{M} := \frac{-\lambda(M)}{\omega_{(h,n)}(M)(\omega_{(h,n)}(M) + 1)}$, this gives:\\

\begin{equation*}
    \begin{split}
        & \sum_{\substack{{G \in \mathcal{M}_n}\\{G \in S_{h,n}}}}\lambda(G)\chi(G) = \sum_{h < x \leq n}[\sum_{P \in \mathcal{P}_x}\chi(P)\sum_{M \in \mathcal{M}_{n-x}}a_m \chi(m) + \sum_{P \in \mathcal{P}_x}\chi(P)\sum_{\substack{{M \in \mathcal{M}_{n-x}}\\{(P,M)\neq 1}}}b_M \chi(M)]\\
        &= \sum_{h < x \leq n}[\sum_{P \in \mathcal{P}_x}\chi(P)\sum_{M \in \mathcal{M}_{n-x}}a_M \chi(M) + \sum_{P \in \mathcal{P}_x}\chi(P)^2 \sum_{M \in \mathcal{M}_{n-2x}}b_{MP} \chi(M)].
    \end{split}
\end{equation*}

Where in the last step a factor of $P$ has been removed from our sum over polynomials $M \in \mathcal{M}_{n-x}$ such that $P|M$. Finally, this decomposition is substituted into our desired sum where we then use the triangle inequality.

\begin{align}
    &\sum_{\substack{{\chi \Mod{t^{N-h}}}\\{\chi \text{ even}}}}|\sum_{\substack{{G \in \mathcal{M}_{n}}\\{G \in \mathcal{S}_{h,n}}}}\lambda(G)\chi(G)|^{2}\nonumber\\
    &= \sum_{\substack{{\chi \Mod{t^{N-h}}}\\{\chi \text{ even}}}}|\sum_{h < x \leq n}[\sum_{P \in \mathcal{P}_x}\chi(P)\sum_{M \in \mathcal{M}_{n-x}}a_M \chi(M) + \sum_{P \in \mathcal{P}_x}\chi(P)^2 \sum_{M \in \mathcal{M}_{n-2x}}b_{MP} \chi(M)]|^2\nonumber\\
    &\ll \sum_{\substack{{\chi \Mod{t^{N-h}}}\\{\chi \text{ even}}}}\sum_{h < x \leq n}|\sum_{P \in \mathcal{P}_x}\chi(P)\sum_{M \in \mathcal{M}_{n-x}}a_M \chi(M)|^2\nonumber\\
    &+ \sum_{\substack{{\chi \Mod{t^{N-h}}}\\{\chi \text{ even}}}}\sum_{h < x \leq n}|\sum_{P \in \mathcal{P}_x} \sum_{M \in \mathcal{M}_{n-2x}}\chi(P)^2 b_{MP} \chi(M)|^2\nonumber\\
    &= \sum_{\substack{{\chi \Mod{t^{N-h}}}\\{\chi \text{ even}}}}\sum_{h < x \leq n}|\sum_{P \in \mathcal{P}_x}\chi(P)|^2|\sum_{M \in \mathcal{M}_{n-x}}a_M \chi(M)|^2\nonumber\\
    &+ \sum_{\substack{{\chi \Mod{t^{N-h}}}\\{\chi \text{ even}}}}\sum_{h < x \leq n}|\sum_{P \in \mathcal{P}_x} \sum_{M \in \mathcal{M}_{n-2x}}\chi(P)^2 b_{MP} \chi(M)|^2\nonumber
\end{align}

giving the desired result.
\end{proof}

Now we can evaluate the sums above separately. We start with the second, smaller sum. 

\begin{lemma}\label{smaller sum}
    \begin{equation*}
        \sum_{\substack{{\chi \Mod{t^{N-h}}}\\{\chi \text{ even}}}}\sum_{h < x \leq n}|\sum_{P \in \mathcal{P}_x}\sum_{M \in \mathcal{M}_{n-2x}}\chi(P)^2 b_{MP} \chi(M)|^2 \ll \frac{n}{h^2}q^{N+n-h}.
    \end{equation*}
\end{lemma}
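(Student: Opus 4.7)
The strategy is to combine the $P$ and $M$ sums into a single Dirichlet character sum over $\mathcal{M}_n$ and then invoke Lemma \ref{MVT}. Since $\chi(P)^2\chi(M) = \chi(P^2 M)$, the inner expression rewrites as $\sum_{G \in \mathcal{M}_n} c_G\,\chi(G)$ upon setting
\begin{equation*}
c_G := \sum_{\substack{G = P^2 M \\ P \in \mathcal{P}_x}} b_{MP},
\end{equation*}
where $c_G = 0$ unless $G$ is divisible by the square of some prime of degree $x$.

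The crucial step is the pointwise bound $|c_G| \leq 1/2$. If exactly $k \geq 1$ primes $P_1,\ldots,P_k \in \mathcal{P}_x$ satisfy $P_j^2 \mid G$, then for each $P_i$ every $P_j$ divides $MP_i = G/P_i$ (the case $j=i$ uses $P_i^2 \mid G$, and for $j \neq i$ one has $P_j^2 \mid G/P_i$), and since $x \in (h,n]$ this forces $\omega_{(h,n)}(MP_i) \geq k$. Because $b_{MP} = -\lambda(MP)/[\omega_{(h,n)}(MP)(\omega_{(h,n)}(MP)+1)]$, one has $|b_{MP_i}| \leq 1/[k(k+1)]$, and summing the $k$ contributions gives $|c_G| \leq k \cdot 1/[k(k+1)] = 1/(k+1) \leq 1/2$.

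With this in hand I would apply Lemma \ref{MVT}. Bounding the even-character sum by the full sum modulo $t^{N-h}$ and using the Prime Polynomial Theorem $|\mathcal{P}_x| \ll q^x/x$ to count the $G$'s with $c_G \neq 0$, one has $\sum_{G \in \mathcal{M}_n} |c_G|^2 \ll q^{n-x}/x$. Lemma \ref{MVT} then yields for each $x$
\begin{equation*}
\sum_{\substack{\chi \Mod{t^{N-h}} \\ \chi \text{ even}}} \Bigl|\sum_{G \in \mathcal{M}_n} c_G\,\chi(G)\Bigr|^2 \ll \phi(t^{N-h})(q^{n-N+h}+1)\frac{q^{n-x}}{x} \ll \frac{q^{2n-x}+q^{N+n-h-x}}{x},
\end{equation*}
and summing over $h < x \leq n$, using $\sum_{x>h} q^{-x}/x \ll q^{-h}/h$ together with $n \leq N$ and $n > h$, gives $\ll q^{N+n-h}/h \leq nq^{N+n-h}/h^2$, as required.

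The main obstacle is the pointwise bound on $c_G$: the Ramaré-type weights $b_{MP}$ must shrink precisely fast enough to absorb the growing number of square-prime decompositions of $G$. The cancellation $k \cdot 1/[k(k+1)] = 1/(k+1)$, driven by the lower bound $\omega_{(h,n)}(MP) \geq k$, is exactly what saves the argument; everything else reduces to the $L^2$ mean value theorem and a geometric summation in $x$.
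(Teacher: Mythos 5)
Your proof is correct, and it is a more careful version of the paper's argument rather than an identical one. Both proofs rest on the same skeleton: recombine $\chi(P)^2\chi(M)=\chi(P^2M)$ into a sum $\sum_{G\in\mathcal M_n}c_G\chi(G)$, apply Lemma \ref{MVT}, and sum a geometric series over $x$. Where you differ is in how $\sum_G|c_G|^2$ is controlled. The paper in effect uses the crude pointwise bound $|c_G|\le|\mathcal P_x|$ (there are at most $|\mathcal P_x|$ decompositions $G=P^2M$), yielding $\sum_G|c_G|^2\ll q^{n-2x}|\mathcal P_x|^2\ll q^n/x^2$ and the final bound $nq^{N+n-h}/h^2$. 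You instead prove the sharper pointwise bound $|c_G|\le 1/2$, exploiting that if $k$ primes $P\in\mathcal P_x$ have $P^2\mid G$ then $\omega_{(h,n)}(G/P)\ge k$ for each of them, so the $k$ Ramaré weights are each $\le 1/(k(k+1))$ and sum to at most $1/(k+1)$. Pairing this with the count $\ll q^{n-x}/x$ of nonzero $c_G$, you get $\sum_G|c_G|^2\ll q^{n-x}/x$ and the slightly stronger final bound $q^{N+n-h}/h$, which (since $h<n$) is indeed $\ll nq^{N+n-h}/h^2$. Your version also has the merit of being a \emph{valid} application of Lemma \ref{MVT}: the paper's proof, as written, applies the mean value theorem to the $M$-sum with the ``coefficient'' $\sum_P\chi(P)^2b_{MP}$ still depending on $\chi$ (and its displayed factor $q^{N-h}(q^{n-2x}+1)$ appears to be a typo for $q^{N-h}(q^{n-2x-(N-h)}+1)$); recombining into $c_G$ first, as you do, removes that objection. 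The extra $|c_G|\le 1/2$ observation is not strictly necessary to obtain the Lemma's stated bound, but it tightens the estimate and is a genuine refinement over the paper's argument.
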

Now a trivial bound is of size $\frac{n}{h^2}q^{N+2n - 3h}$, therefore this better by a factor of $q^{n-2h}$, assuming $n$ is significantly larger than $h$. If $n$ is small then the trivial bound is already strong.

\begin{proof}
\begin{equation*}
\begin{split}
    &\sum_{\substack{{\chi \Mod{t^{N-h}}}\\{\chi \text{ even}}}}\sum_{h < x \leq n}|\sum_{P \in \mathcal{P}_x}\sum_{M \in \mathcal{M}_{n-2x}}\chi(P)^2 b_{MP}\chi(M)|^2\\
    &= \sum_{h < x \leq n} \sum_{\substack{{\chi \Mod{t^{N-h}}}\\{\chi \text{ even}}}}|\sum_{M \in \mathcal{M}_{n-2x}}\sum_{P\in \mathcal{P}_x}\chi(P)^2 b_{MP} \chi(M)|^2
\end{split}
\end{equation*}
Using Lemma \ref{MVT}, with $a_G = \sum_{P\in \mathcal{P}_x}\chi(P)^2 b_{MP} \chi(M)$ this is
\begin{equation*}
\begin{split}
    &\ll \sum_{h < x \leq n}q^{N-h}(q^{n-2x}+1)\sum_{M\in \mathcal{M}_{n-2x}}|\sum_{P\in \mathcal{P}_x}\chi(P)^2 b_{MP}\chi(M)|^2\\
    &\ll \sum_{h < x \leq n}q^{N-h} \sum_{M \in \mathcal{M}_{n-2x}}|\mathcal{P}_x|^2\\
    &\leq \sum_{h < x \leq n}q^{N-h} q^{n-2x} \frac{q^{2x}}{x^2}\\
    &\ll \frac{n}{h^2}q^{N+n-h}.
\end{split}
\end{equation*}
\end{proof}

The sum is trivially bounded by $\frac{n}{h^2}q^{N + 2n -2h}$, by using Lemma \ref{MVT} we are saving a factor of $q^{n-h}$.
Using Lemma \ref{MVT}, our sum becomes small enough that the diagonal term is dominant, therefore we achieve the best bound possible from this lemma. To achieve stronger results, a different tool to the mean value theorem would be required. Moving onto the first larger sum, we prove the following.

\begin{lemma}\label{larger sum}
    \begin{equation*}
        \sum_{\substack{{\chi \Mod{t^{N-h}}}\\{\chi \text{ even}}}}\sum_{h < x \leq n}|\sum_{P \in \mathcal{P}_x}\chi(P)|^2|\sum_{M \in \mathcal{M}_{n-x}}a_M \chi(M)|^2 
        \ll (n-h)\frac{(N-h)^2}{h^2} q^{N+n-h}
    \end{equation*}
\end{lemma}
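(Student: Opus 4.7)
The plan is to bound the prime character sum and the polynomial character sum separately, and then combine them multiplicatively. For the prime sum, Proposition \ref{l1 pcs} provides a bound on $|\sum_{P \in \mathcal{P}_x}\chi(P)|^2$ that is \emph{uniform} in non-principal $\chi$, so it factors out of the character sum. The remaining mean value sum over $\chi$ of $|\sum_M a_M \chi(M)|^2$ is then handled by Lemma \ref{MVT} with modulus $Q = t^{N-h}$, which is efficient here because the constraints $x > h$ and $n \leq N$ place us in the regime $n - x < \deg(Q)$.

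Concretely, I would first swap the order of summation so that the sum over $x$ sits outside. For each $x$ with $h < x \leq n$ and each non-principal even character $\chi$ modulo $t^{N-h}$, Proposition \ref{l1 pcs} gives
\[
\Big|\sum_{P \in \mathcal{P}_x} \chi(P)\Big|^2 \ll \frac{(N-h)^2}{x^2}\, q^x,
\]
uniformly in $\chi$. (The sole principal even character must be separated; since $a_M$ carries the factor $\lambda(M)$ and there is only one such character, its contribution is a lower-order term that can be absorbed.) Pulling this uniform bound outside, the remaining sum is $\sum_{\chi}\big|\sum_M a_M \chi(M)\big|^2$, to which I apply Lemma \ref{MVT}. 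Using $|a_M| \leq 1$ and hence $\sum_M |a_M|^2 \leq q^{n-x}$, and noting that $x > h$ together with $n \leq N$ forces $n - x < N - h$ so that $q^{n-x-(N-h)} + 1 \asymp 1$, the mean value theorem yields
\[
\sum_{\chi \Mod{t^{N-h}}} \Big|\sum_{M \in \mathcal{M}_{n-x}} a_M \chi(M)\Big|^2 \ll q^{N-h}\cdot q^{n-x}.
\]

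Combining the two bounds, the $q$-factors collapse to $q^{N+n-h}$ independently of $x$, leaving a geometric sum:
\[
\sum_{h < x \leq n} \frac{(N-h)^2}{x^2}\, q^x \cdot q^{N-h+n-x} = (N-h)^2\, q^{N+n-h} \sum_{h < x \leq n} \frac{1}{x^2} \ll \frac{(n-h)(N-h)^2}{h^2}\, q^{N+n-h},
\]
which is the stated bound. The main obstacle is ensuring that the principal character contribution is indeed negligible, since Proposition \ref{l1 pcs} does not apply to it directly; once that is dispatched, the remainder of the argument is a mechanical product of a pointwise bound with a mean value bound, plus a short $\sum 1/x^2$ estimate.
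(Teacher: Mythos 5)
Your argument is essentially the paper's proof verbatim: square Proposition \ref{l1 pcs} and use it pointwise to pull out the factor $\frac{(N-h)^2}{x^2}q^x$, apply Lemma \ref{MVT} to the remaining character sum using $|a_M|\le 1$ together with $n-x<N-h$ to get $q^{N+n-h-x}$, combine, and bound $\sum_{h<x\le n}x^{-2}\le (n-h)/h^2$. The one thing you add is the observation that Proposition \ref{l1 pcs} is stated only for non-principal $\chi$; the paper applies it to all even characters without comment. That extra caution is a virtue, but your dispatch of the principal character as "a lower-order term that can be absorbed" is asserted, not shown, and is not obvious: for $\chi_0$ the prime sum has size roughly $q^x/x$ rather than $q^{x/2}(N-h)/x$, and with only the trivial bound $|\sum_M a_M\chi_0(M)|\le q^{n-x}$ the $\chi_0$ term is of order $q^{2n}/h$, which overwhelms the claimed bound as soon as $q^h\gg N^3$. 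The cleanest resolution is that the Keating--Rudnick variance formula (Lemma \ref{Variance Formula}) in fact runs over even \emph{non-principal} characters, so $\chi_0$ never appears; the paper's statement and proof implicitly rely on this, and it would be worth making explicit if you invoke the caveat.
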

\begin{proof}

Using our bound on $|\sum_{P \in \mathcal{P}_x}\chi(P)|$ we have
\begin{equation*}
\begin{split}
    &\sum_{\substack{{\chi \Mod{t^{N-h}}}\\{\chi \text{ even}}}}\sum_{h < x \leq n}|\sum_{P \in \mathcal{P}_x}\chi(P)|^2|\sum_{M \in \mathcal{M}_{n-x}}a_M \chi(M)|^2\\
    &\ll \sum_{\substack{{\chi \Mod{t^{N-h}}}\\{\chi \text{ even}}}}\sum_{h < x \leq n}\left(\frac{N-h}{x}q^{\frac{x}{2}}\right)^2|\sum_{M \in \mathcal{M}_{n-x}}a_M \chi(M)|^2\\
    &= \sum_{h < x \leq n}\frac{(N-h)^2}{x^2}q^x \sum_{\substack{{\chi \Mod{t^{N-h}}}\\{\chi \text{ even}}}}|\sum_{M \in \mathcal{M}_{n-x}}a_M \chi(M)|^2
\end{split}
\end{equation*}
Now evaluating the character sum using Lemma \ref{MVT} we have

\begin{equation*}
\begin{split}
    &\sum_{\substack{{\chi \Mod{t^{N-h}}}\\{\chi \text{ even}}}}|\sum_{M \in \mathcal{M}_{n-x}}a_M \chi(M)|^2 \\
    &\ll q^{N-h}(q^{n-x-(N-h)}+1)\sum_{M \in \mathcal{M}_{n-x}}|a_M \chi(M)|^2\\
    &\ll q^{N-h}\sum_{M \in \mathcal{M}_{n-x}}1
    = q^{N+n-h-x}.
\end{split}
\end{equation*}
Where we have used that $|a_M \chi(M)| \leq 1$.\\

Substituting this in, we get
\begin{equation*}
\begin{split}
    &\sum_{h < x \leq n}\frac{(N-h)^2}{x^2}q^x \sum_{\substack{{\chi \Mod{t^{N-h}}}\\{\chi \text{ even}}}}|\sum_{M \in \mathcal{M}_{n-x}}a_M \chi(M)|^2\\
    &\ll \sum_{h < x \leq n}\frac{(N-h)^2}{x^2}q^x q^{N+n-h-x}\\
    &=\sum_{h < x \leq n}\frac{(N-h)^2}{x^2}q^{N+n-h}\\
    &\leq (n-h)\frac{(N-h)^2}{h^2}q^{N+n-h},
\end{split}
\end{equation*}
Completing the proof.
\end{proof}

\begin{proof}[Proof of Proposition \ref{LargePF}].
    Substituting Lemma \ref{smaller sum} and Lemma \ref{larger sum}:
\begin{equation*}
    \begin{split}
        \sum_{\substack{{\chi \Mod{t^{N-h}}}\\{\chi \text{ even}}}}|\sum_{\substack{{G \in \mathcal{M}_n}\\{G \notin \mathcal{S}_{h,n}}}}\lambda(G)\chi(G)|^2 &\ll \frac{N}{h^2}q^{N+n-h} + (n-h)\left(\frac{N-h}{h}\right)^2 q^{N+n-h}\\
        &\ll \frac{N^3}{h^2} q^{N+n-h},
    \end{split}
\end{equation*}

completing the proof of Proposition \ref{LargePF}.
\end{proof}

\subsection*{Smooth Polynomials}

\begin{proposition}\label{SmoothPF}
    Let $h \ll \sqrt{N}$ then as $N \to \infty$ we have
    \begin{equation*}
        \sum_{\substack{{\chi \Mod{t^{N-h}}}\\{\chi \text{ even}}}}|\sum_{\substack{{G \in M_{n}}\\{G \in \mathcal{S}_{h,n}}}}\lambda(G)\chi(G)|^{2} \ll q^{n+N-h} + q^{2(N-h)}.
    \end{equation*}
\end{proposition}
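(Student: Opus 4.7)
The plan is to apply the $L^2$ mean value theorem directly to the smooth-support coefficients $a_G = \lambda(G)\mathbbm{1}_{G\in\mathcal{S}_{h,n}}$, taking advantage of the fact that $|\mathcal{S}_{h,n}|$ is much smaller than $|\mathcal{M}_n|=q^n$. First, I would upper bound the sum over even characters by the full character sum modulo $t^{N-h}$, and invoke Lemma \ref{MVT} with $Q = t^{N-h}$, for which $\phi(Q)\leq q^{N-h}$. Since $|a_G|^2\leq \mathbbm{1}_{G\in\mathcal{S}_{h,n}}$, this gives
\[
\sum_{\chi\!\!\!\!\mod t^{N-h}}\Bigl|\sum_{G\in\mathcal{S}_{h,n}}\lambda(G)\chi(G)\Bigr|^{2}
\;\ll\; q^{N-h}\bigl(q^{\,n-(N-h)}+1\bigr)\,|\mathcal{S}_{h,n}|
\;\ll\; (q^{n}+q^{N-h})\,|\mathcal{S}_{h,n}|.
\]

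Next, I would apply Theorem \ref{No smooth}, which gives $|\mathcal{S}_{h,n}|\ll q^{n}\exp\!\bigl(-(1+o(1))\tfrac{n}{h}\log(n/h)\bigr)$ whenever $h\leq n$ (for $n<h$ the trivial bound $|\mathcal{S}_{h,n}|\leq q^{n}$ suffices). The contribution $q^{N-h}\cdot|\mathcal{S}_{h,n}|$ is at most $q^{n+N-h}$, which matches the first term of the target bound. It remains to show that
\[
q^{n}\cdot|\mathcal{S}_{h,n}|\;\leq\; q^{2n}\exp\!\Bigl(-\tfrac{n}{h}\log(n/h)\Bigr)\;\ll\; q^{2(N-h)},
\]
which I would settle by a case split on the size of $n$. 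When $n\leq N-h$ the inequality is immediate after discarding the exponential factor. When $N-h<n\leq N$, writing $n=N-h+m$ with $0<m\leq h$, the required inequality reduces to $\tfrac{n}{h}\log(n/h)\geq 2m\log q$, which is implied by $\tfrac{n}{h}\log(n/h)\gg h\log q$. Under the hypothesis $h\ll\sqrt{N}$ one has $n/h\geq (N-h)/h\gg\sqrt{N}$, so the left-hand side grows at least like $\sqrt{N}\log N$, comfortably dominating $h\log q\ll\sqrt{N}\log q$ for fixed $q$.

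The main obstacle, and the precise place where the hypothesis $h\ll\sqrt{N}$ is essential, is the regime $n$ close to $N$: there both $q^{n}$ and $|\mathcal{S}_{h,n}|$ are sizeable, and one is reduced to checking that the exponential savings from the smooth bound beat a loss of up to a factor $q^{h}$. The condition $h\ll\sqrt{N}$ is exactly what guarantees this. Any weakening of the hypothesis would force a sharper exploitation of the smooth bound or a substitute for Lemma \ref{MVT} in this range.
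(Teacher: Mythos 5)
Your proposal is correct and takes essentially the same route as the paper: both apply the $L^2$ mean value theorem (Lemma \ref{MVT}) to the smooth-supported coefficients $a_G = \lambda(G)\mathbbm{1}_{G\in\mathcal{S}_{h,n}}$ and then invoke the smooth-count bound of Theorem \ref{No smooth}, with the hypothesis $h\ll\sqrt{N}$ entering precisely to control the regime $n$ close to $N$. The only organizational difference is that the paper isolates the estimate $|\mathcal{S}_{h,N}|\ll q^{N-h}$ as a separate lemma and uses the monotonicity $|\mathcal{S}_{h,n}|\le|\mathcal{S}_{h,N}|$, whereas you verify $q^{n}|\mathcal{S}_{h,n}|\ll q^{2(N-h)}$ directly via a case split on $n$ versus $N-h$.
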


For this to hold, the smaller $h$ is the fewer number of $h$-smooth polynomials there are. To prove the proposition we bound the contribution from $h$-smooth polynomials by a function and determine $h$ such that the desired bound holds.\\

\begin{lemma}\label{bound no smooth}
    Given that $h \ll \sqrt{N}$, as $N \to \infty$, we have
    \begin{equation*}
        |\mathcal{S}_{h,N}| \ll q^{N-h}
    \end{equation*}
\end{lemma}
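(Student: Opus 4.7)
The plan is to apply Theorem \ref{No smooth} directly and then compare exponents. From Theorem \ref{No smooth} we have
\begin{equation*}
|\mathcal{S}_{h,N}| = q^N e^{-(1+o(1))\frac{N}{h}\log(N/h)},
\end{equation*}
so the desired bound $|\mathcal{S}_{h,N}| \ll q^{N-h}$ is equivalent to establishing the inequality
\begin{equation*}
(1+o(1))\frac{N}{h}\log(N/h) \geq h \log q
\end{equation*}
for all sufficiently large $N$.

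The strategy is then to exploit the hypothesis $h \ll \sqrt{N}$ in two places. First, since $h \ll \sqrt{N}$, we have $N/h \gg \sqrt{N}$, so that $\log(N/h) \geq \tfrac{1}{2}\log N + O(1) \to \infty$ as $N \to \infty$. Second, $N/h \gg \sqrt{N} \gg h$, so that $\tfrac{N}{h}$ itself already dominates $h$. Combining these,
\begin{equation*}
\frac{N}{h}\log(N/h) \gg h \cdot \log(N/h),
\end{equation*}
and the right-hand side exceeds $h \log q$ once $N$ is large enough (recall $q$ is fixed), which gives the required inequality on exponents.

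There is no real obstacle here: the lemma is an immediate consequence of Theorem \ref{No smooth} together with the size restriction on $h$. The only mild point of care is to track the $(1+o(1))$ factor in the exponent, but since $\log(N/h) \to \infty$ while $\log q$ is a fixed constant, there is a large buffer between the two sides of the inequality, and absorbing the $o(1)$ causes no trouble for $N$ sufficiently large.
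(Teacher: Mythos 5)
Your proof is correct and follows essentially the same route as the paper: apply Theorem \ref{No smooth}, then reduce to comparing exponents and use $h \ll \sqrt{N}$ to show $\frac{N}{h}\log(N/h)$ eventually dominates $h\log q$. If anything, you are slightly more careful than the paper's own proof in explicitly tracking the $(1+o(1))$ factor in the exponent (the paper writes this asymptotic as a $\sim$ relation, which is a mild abuse), but the substance is identical.
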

\begin{proof}
    As $N \to \infty$, with $h \ll N$, we have
    \begin{equation*}
    \begin{split}
        |\mathcal{S}_{h,N}| &\sim q^N e^{-\frac{N}{h}\log \frac{N}{h}}\\
        &= q^{N - \frac{N}{h} \log_q \frac{N}{h}}.
    \end{split}
    \end{equation*}
    Thus we need to show that when $h \ll \sqrt{N}$, we have 
    \begin{equation*}
        h^2 \ll N\log_q \frac{N}{h}.
    \end{equation*}

    Now, $N \log_q \frac{N}{h}$ is a decreasing function of $h$, so for $h \ll \sqrt{N}$ we have: 
    \begin{equation*}
    \begin{split}
        N \log_q \frac{N}{h} &\gg N \log_q \sqrt{N}\\
        &\gg h^2,
    \end{split}
    \end{equation*}
    as required.
\end{proof}

\begin{proof} [Proof of Proposition \ref{SmoothPF}]
We define the sequence $\{a_{G}\}_{G \in \mathcal{M}_{n}} \subset \mathbb{C}$ to be the indicator function of an $h$-smooth polynomial.
\begin{equation*}
    a_{G} = \begin{cases}
        \lambda(G), &G \text{ is $h$-smooth}\\
        0, &\text{otherwise.} \\
    \end{cases}
\end{equation*}

Now we have 
\begin{equation*}
    \sum_{\substack{{\chi \Mod{t^{N-h}}}\\{\chi \text{ even}}}}|\sum_{\substack{{G \in \mathcal{M}_{n}}\\{G \in \mathcal{S}_{h,n}}}} \lambda(G)\chi(G)|^{2} = \sum_{\substack{{\chi \Mod{t^{N-h}}}\\{\chi \text{ even}}}}|\sum_{G \in \mathcal{M}_{n}}a_G \chi(G)|^{2}.
\end{equation*}

Furthermore, by Lemma \ref{bound no smooth} we know $\sum_{G \in \mathcal{M}_n}|a_G| = |\mathcal{S}_{h,n}|$ is small for $h$ in the given range. Thus using Lemma \ref{MVT} we have

\begin{equation*}
    \begin{split}
        &\sum_{\substack{{\chi \Mod{t^{N-h}}}\\{\chi \text{ even}}}}|\sum_{\substack{{G \in \mathcal{M}_{n}}\\{G \in \mathcal{S}_{h,n}}}}\lambda(G)\chi(G)|^{2} \ll q^{N-h} (q^{n-(N-h)} + 1)\sum_{G \in \mathcal{M}_n}|a_G|\\
        &\ll (q^n + q^{N-h})|\mathcal{S}_{h,n}| \ll (q^n + q^{N-h})|\mathcal{S}_{h,N}| \ll q^{N+n-h} + q^{2(N-h)}.\\
    \end{split}
\end{equation*}
\end{proof}

\pagebreak

\subsection*{Collecting Terms}
\begin{proof}[Proof of Theorem \ref{Result}]
    Substituting Proposition \ref{LargePF} and Proposition \ref{SmoothPF} into Equation \ref{split fml} we get:

\begin{equation*}
\begin{split}
    \Var(\lambda_{N,h}) 
    &\ll \frac{1}{q^{2(N-h)}}\left( \sum_{n=0}^{N}\left(\frac{N^3}{h^2}q^{N+n-h} + q^{2(N-h)}\right)^{1/2}\right)^2\\
    &\ll \frac{1}{q^{2(N-h)}}\left( N\left(\frac{N^3}{h^2}q^{2N-h} + q^{2(N-h)}\right)^{1/2}\right)^2\\
    &= \frac{1}{q^{2(N-h)}} (\frac{N^5}{h^2}q^{2N-h}+N^2 q^{2(N-h)})\\
    &\ll \frac{N^5}{h^2}q^h.
\end{split}
\end{equation*}

As required.
\end{proof}

\pagebreak
\bibliographystyle{plain} 
\bibliography{references}

\end{document}